\newtheorem{theorem}{Theorem}[section]
\newtheorem{corr}{Corollary}[section]
\newtheorem{lemma}{Lemma}[section]
\newtheorem{prop}{Proposition}[section]
\newtheorem{df}{Definition}[section]
\newcommand{\conv}{\mathop{\rm conv}\nolimits}
\newcommand{\dist}{\mathop{\rm dist}\nolimits}
\newcommand{\rk}{\mathop{\rm rank}\nolimits}
\newcommand{\rp}{\mathop{\rm dim^E_2}\nolimits}
\newcommand{\sr}{\mathop{\rm  dim^S_2}\nolimits}
\newcommand{\ur}{\mathop{\rm  dim^J_2}\nolimits}
\newcommand{\aff}{\mathop{\rm aff}\nolimits}
\newcommand{\kk}{\mathop{\rm K}\nolimits}
\newcommand{\cg}{\mathop{\rm CG}\nolimits}
\title {Graphs and spherical two--distance sets}
\author {Oleg R. Musin\thanks{This research is partially supported by the NSF grant DMS--1400876.}}
\begin{document}
\date{}
\maketitle

\begin{abstract} 
Every graph $G$ can be embedded in a Euclidean space as a two--distance set. The Euclidean representation number of $G$ is the smallest dimension in which $G$ is representable by such an embedding. We consider spherical and J--spherical representation numbers of $G$ and give exact formulas for these numbers using  multiplicities of polynomials that are defined by  the Caley--Menger determinant. One of the main results of the paper are explicit formulas for the representation numbers of the join of graphs which are obtained from W. Kuperberg's  type theorem for two--distance sets.  
\end {abstract}

Throughout this paper we will consider only simple graphs, ${\mathbb R}^{d}$ will denote the $d$--dimensional Euclidean space,  ${\Bbb S}^{n}$ will denote the $n$--dimensional unit sphere in ${\mathbb R}^{n+1}$,  and $\dist(x,y):=||x-y||$ will denote the Euclidean distance in  ${\Bbb R}^d$. For a set $X\subset {\mathbb R}^{d}$ we shall denote the affine hull (or affine span) by $\aff(X)$, $\rk(X):=\dim{\aff(X)}$ and  $\conv(X)$  will denote the convex hull of $X$. We will denote the cardinality of a finite set $X$ by $|X|$ .  

\section{Introduction}

Representations (embeddings) of a graph $G$  into a metric space, in particular into  ${\Bbb R}^d$,  is a classical discrete geometry problem (see \cite[Ch. 6,19] {DezaLa} and \cite[Ch. 15,19] {Deza2}). The dimension of  $G$ is the smallest $d$ for which it can be embedded in  ${\Bbb R}^d$ as a unit--distance graph \cite{bezdek2013}.  In this paper we consider  the smallest $d$ for which $G$ can be embedded as a two--distance set. 

Let $G$ be a graph on $n$ vertices. Consider a {\em Euclidean representation of $G$} in ${\Bbb R}^d$ as a two--distance set. In other words, there are two positive real numbers $a$ and $b$ with $b\ge a>0$ and an embedding $f$ of the vertex set of $G$ into  ${\Bbb R}^d$ such that 
$$ \dist(f(u),f(v)):=\left\{
\begin{array}{l}
a \; \mbox{ if } uv \mbox{ is an edge of } G\\
b \;  \mbox{ otherwise}
\end{array} 
\right.
$$
After Roy \cite{Roy2010}, the smallest $d$ such that $G$ is representable in  ${\Bbb R}^d$ we will call the {\em Euclidean representation number} of $G$ and denote it $\rp(G)$.

Einhorn and Schoenberg \cite{ES} showed that  $\rp(G)$ can be found explicitly in terms of the multiplicity  $\mu(G)$ of the root $\tau_1$ of the discriminating polynomial (see Section 2). 

\medskip

\noindent{\bf Theorem \ref{thER}} {\em Let $G$ be a graph with $n$ vertices. Then 
$$  \rp(G)=n-\mu(G)-1.$$
}

In Section 3 we consider representations of $G$ as spherical two--distance sets.  Let $f$ be a Euclidean representation of  $G$ in ${\Bbb R}^d$ with the minimum distance $a=1$.   We say that $f$ is {\em spherical} if the image $f(G)$ lies on a $(d-1)$--sphere in ${\Bbb R}^d$.  We denote by $\sr(G)$ the smallest $d$ such that $G$ is spherically representable in  ${\Bbb R}^d$.  

If $d\le n-2$, then  $f$ is uniquely defined up to isometry (see Section 2). Therefore, if $f$ is spherical, then the circumradius of $f(G)$ is also uniquely defined.  We denote it by ${\mathcal R}(G)$. If $f$ is not spherical or $\mu(G)=0$, then we put ${\mathcal R}(G)=\infty$  (Definition \ref{DefR}).

\medskip

\noindent{\bf Theorem \ref{cor21}.} {\em Let $G$ be a graph with $n$ vertices. Then 
$$  \sr(G)=\left\{
\begin{array}{l}
\rp(G), \quad {\mathcal R}(G)<\infty;\\
n-1, \qquad \: \, {\mathcal R}(G)=\infty. 
\end{array} 
\right.
$$
}


 Nozaki and Shinohara \cite{NS2012} also give necessary and sufficient conditions of a Euclidean representation of $G$ to be spherical. However, their conditions are more bulky. Namely, they used Roy's theorem (see  \cite[Theorem 2.4]{NS2012}) and they showed that among five types of conditions only three of them yields sphericity \cite[Theorem 3.7]{NS2012}. 

Nozaki and Shinohara also considered strongly regular graphs. For instance, they proved the following interesting fact: {\em a graph $G$ with $n$ vertices is strongly regular if and only if} $\sr(G)+\sr(\bar G)+1=n$  \cite[Theorem 4.5]{NS2012}.

\medskip 

Theorem \ref{th23} states that  ${\mathcal R}(G)\ge1/\sqrt{2}$. In Section 4 we  consider the extreme case  ${\mathcal R}(G)=1/\sqrt{2}$.    Let $f$ be a spherical representation of a graph $G$  in ${\Bbb R}^d$  as a two--distance set.  We say that $f$ is a {J--spherical representation of $G$} if the image $f(G)$ lies in a sphere ${\mathbb S}^{d-1}$ of radius  $1/\sqrt{2}$ and  the first  (minimum) distance $a=1$.  

To prove the existence of J--spherical representations is not very easy.  Corollary \ref{corr41}  states that for any graph $G\ne K_n$ there is a unique (up to isometry) J--spherical representation. 
Then for a J--spherical representation $f:G\to{\Bbb R}^d$  the dimension $d$ and second distance $b$ are uniquely defined, we denote these $d$ and $b$  by $\ur(G)$ and   $\beta_*(G)$ respectevely.

\medskip

\noindent{\bf Theorem \ref{Jrepth}.} {\em Let $G\ne K_n$  be a graph on $n$ vertices. Then 
$$  \ur(G)=\left\{
\begin{array}{l}
\rp(G), \quad {\mathcal R}(G)=1/\sqrt{2};\\
n-1, \qquad \: \,  {\mathcal R}(G)>1/\sqrt{2}. 
\end{array} 
\right.
$$
}



In Section 5 we consider W. Kuperberg's theorem on sets $S$ in  ${\Bbb S}^{n-1}$ with $n+2\le|S|\le2n$ and the minimum distance between points of $S$ at least $\sqrt{2}$ \cite{kuperberg2007}. Theorem \ref{KTD} shows that $S$ is the join of its subsets $S_i$. If $S$ is a two--distance set, then $S$ is a J--spherical representation.  

Using results of Section 5, in Section 6 we give explicit formulas for representation numbers in the case when $G$ is the graph join: $G=G_1+\ldots+G_m$. In particular, these formulas can be applied for the complete multipartite graph  $K_{n_1\ldots n_m}$.

\medskip 

\noindent {\bf Theorem \ref{thJG}.}  {\em Let  $G_1,\ldots,G_m$ be a finite collection of graphs with $n_1,\ldots,n_m$ vertices respectively, let $G:=G_1+\ldots+G_m$ and  $n:=n_1+\ldots+n_m$.  Suppose   
$$\beta_*(G_1)=\ldots=\beta_*(G_{k})<\beta_*(G_{k+1})\le\ldots\le\beta_*(G_m).$$ 
 Then 
$$\ur(G)=\ur(G_1)+\ldots+\ur(G_k)+n_{k+1}+\ldots+n_m, $$
$$\sr(G)=\ur(G), \qquad \rp(G)=\min(\ur(G),n-2).$$
}

\medskip

\noindent{\bf Corollary \ref{corJ}.}  {\em  Let $G$ be the complete multipartite graph $K_{n_1\ldots n_m}$.
Suppose 
$$n_1=\ldots=n_{k} > n_{k+1}\ge\ldots\ge n_m$$ 
and let $n:=n_1+\ldots+n_m.$  Then
 \begin{enumerate}
 \item  $\rp(G)=\min(n-k,n-2);$
 \item $\sr(G)=\ur(G)=n-k$.
  \end{enumerate}
}


Note that Statement 1 in Corollary \ref{corJ} was first proved by Roy \cite[Theorem 1]{Roy2010}. 

\medskip 

In Section 7 we consider seven open problems on representations of graphs. 



\section{Euclidean representations of graphs} 
In this section we consider Euclidean representations of graphs as two--distance sets. 

A complete graph $K_n$ represents the vertices of a regular $(n -1)$--simplex.  In fact, this is a representation of $K_n$ as a  one--distance set. Then  $\rp(\kk_n)=n-1$ and 
$$
\rp(G)\le n-1
$$
for any graph $G$ with $n$ vertices.

Thus we have a correspondence between graphs and two-distance sets. Let  $S$ be a two--distance set in ${\Bbb R}^d$ with distances $a$ and $b\ge a$. Denote by $\Gamma(S)$ a graph with  $S$  as the set vertices and edges $[pq]$, $p,q\in S$, such that $\dist(p,q)=a$. Then $S$ is a Euclidean representation of $G=\Gamma(S)$.

Let $S$ be a two--distance set of cardinality $n$ in ${\Bbb R}^d$. Then, see \cite{bannai1983,blokhuis1984}, we have 
$$
n\le\frac{(d+1)(d+2)}{2}.  \eqno (2.1)
 $$ 
(Lison{\v{e}}k \cite{lisonek1997} shows that the upper  bound (2.1) is tight for $d=8$.) This bound  implies the following lower bound  
$$
\rp(G)\ge\frac{\sqrt{8n+1}-3}{2}. 
$$

 Let $G$ be a graph with $n$ vertices. 
 Einhorn and Schoenberg \cite{ES} considered Euclidean representations of graphs. They  proved that
 
 \medskip
 
  {\em $\rp(G)=n-1$ if and only if $G$ is a disjoint union of cliques.}
  
  \medskip

\noindent Moreover,  they  have shown that 

\medskip

{\em If $\rp(G)\le n-2$, then a Euclidean representation of $G$ in ${\Bbb R}^d$, where $d:=\rp(G)$, is uniquely defined up to isometry.}

\medskip

Let $S=\{p_1,\ldots,p_n\}$ be a two-distance set with distances $a=1$ and $b>1$. Let  $d_{ij}:=\dist(p_i,p_j)$.
Consider the Cayley--Menger determinant 

$$
C_S:=
 \begin{vmatrix}
 0 & 1 & 1 & ... & 1\\
 1 & 0 & d_{12}^2 & ... & d_{1n}^2\\
  1  & d_{21}^2 & 0 & ... & d_{2n}^2\\
\hdotsfor{5}\\
\hdotsfor{5}\\
  1 & d_{n1}^2 & d_{n2}^2 & ... & 0
  \end{vmatrix} 
\eqno (2.2)
 $$
  Since for $i\ne j$, $d_{ij}=1$ or $b$,  $C_S$ is a polynomial in $t=b^2$. Denote this polynomial by $C_G(t)$.

 Actually, in \cite{ES} instead of $C_G$ the discriminating polynomial $D(t)$ is considered. This polynomial can be defined through the Gram determinant. Since, see \cite[Lemma 9.7.3.3]{Berger},  
 $$C_G(t)=(-1)^nD(t)$$  
 these two polynomials are the same up to the sign and therefore have the same roots.

\begin{df} Let $G$ be a graph with $n$ vertices.    Let $\tau_1=\tau_1(G)$ be the smallest root of $C_G(t)$, i.e. $C_G(\tau_1)=0,$ such that $\tau_1> 1$.  By $\mu(G)$ we denote the multiplicity of the root $\tau_1(G)$ of $C_G$. If all roots $t_*\le 1$, then we put $\tau_1(G)=\infty$ and $\mu(G)=0$. 
\end{df}

 Einhorn and Schoenberg  proved that if $S$ is embedded exactly in ${\Bbb R}^d$, then $\tau_1$ is a root of $C_G(t)$ of exact multiplicity $n-d-1$ \cite[Lemma 6]{ES}. Equivalently, we have the following theorem:

\begin{theorem} \label{thER} Let $G$ be a graph with $n$ vertices. Then 
$$\rp(G)=n-\mu(G)-1.$$
\end{theorem}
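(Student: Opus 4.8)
The plan is to read the formula off the Einhorn--Schoenberg Lemma 6 quoted just above the statement, after making explicit the dictionary between representations and the polynomial $C_G$. First I would recall that a Euclidean representation of $G$ with distances $1$ and $b>1$ is the same datum as a value $t=b^2>1$ for which the symmetric matrix with entries $d_{ij}^2$ (equal to $1$ on edges and to $t$ on non--edges) is a genuine squared--distance matrix, and that the embedding dimension of the resulting set $S$ is $\rk(S)=\dim\aff(S)$. The classical property of the Cayley--Menger determinant is that $C_G(t)=0$ exactly when the $n$ realized points are affinely dependent, i.e. when the dimension is $\le n-2$; when the dimension equals $n-1$ the points are affinely independent and $C_G(t)\neq 0$. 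Thus the dimension of a representation can only drop below $n-1$ at a root of $C_G$, and the size of the drop is governed by the multiplicity of that root.

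Next I would analyse how the dimension depends on $t$, which is the heart of the matter. For $t$ slightly larger than $1$ the configuration is affinely independent, so the dimension is $n-1$. Following Einhorn and Schoenberg, I would track the (centred) Gram matrix, whose entries are affine in $t$: as $t$ increases from $1$ it stays positive semidefinite until its smallest eigenvalues first vanish, and this first vanishing occurs precisely at the smallest root $\tau_1>1$. If $\mu(G)$ eigenvalues vanish simultaneously there, the rank drops by $\mu(G)$, so the set $S$ with $b^2=\tau_1$ is still realizable and lies in dimension exactly $n-1-\mu(G)$; for $t>\tau_1$ those eigenvalues turn negative and the data cease to be Euclidean. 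This is exactly what singles out $\tau_1$ as the smallest root exceeding $1$ and shows that $t=\tau_1$ yields the \emph{smallest} attainable dimension, so that $\rp(G)=n-1-\mu(G)$.

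With the dimension--minimizing value identified, the theorem is just a rearrangement of Lemma 6: taking $d:=\rp(G)$ and a representation realizing it, Lemma 6 gives that $\tau_1$ is a root of $C_G$ of exact multiplicity $n-d-1$, i.e. $\mu(G)=n-\rp(G)-1$, which is the claimed identity. Finally I would check the degenerate case $\mu(G)=0$: then $C_G$ has no root exceeding $1$, the configuration remains affinely independent throughout, and $G$ is a disjoint union of cliques with $\rp(G)=n-1$, again matching the formula. The main obstacle is the middle step, namely proving that the first root $\tau_1>1$ is simultaneously realizable (positive semidefinite Gram matrix) and dimension--minimizing, rather than some larger root carrying a higher multiplicity; I expect to handle this by the eigenvalue--monotonicity argument sketched above, or simply by invoking Lemma 6 directly, since beyond $\tau_1$ positivity of the Gram matrix fails.
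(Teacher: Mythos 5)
Your proposal is correct and takes essentially the same route as the paper: the paper gives no independent argument but simply observes that the statement is an equivalent reformulation of Einhorn--Schoenberg's Lemma 6, which is exactly the lemma your proof pivots on. The extra material you sketch (admissible values of $t$ form an interval ending at $\tau_1$, where the rank of the Gram matrix drops by the root multiplicity, making $t=\tau_1$ both realizable and dimension--minimizing) is precisely the Einhorn--Schoenberg machinery underlying that equivalence, so your outline does not depart from the paper's reasoning.
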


Roy \cite{Roy2010} found that $\rp(G)$ depends on certain eigenvalues of graphs. Actually,  these  dimensions are closely related with the multiplicity of the smallest (or second smallest) eigenvalue of the adjacency matrix $A(G).$ 
  
  In \cite{ES, NS2012,Roy2010}  two Euclidean representation numbers $\rp(G)$ and $\rp(\bar G)$ are considered , where $\bar G$ is the graph complement of $G$. These numbers can be different. For instance, let $G$ be the disjoint union of $m$ edges. Then $\rp(G)=2m-1$. On the other hand, $\bar G$ is the complete multipartie graph $K_{2,\ldots,2}$. It follows from \cite[Theorem 2]{ES} or \cite[Theorem 1]{Roy2010} (see also \cite{alexandrov2016}) that $$\rp(K_{2,\ldots,2})=m.$$ 
Indeed,  $G=K_{2,\ldots,2}$, then $n=2m$ and
  $$C_G(t)=2m\,t^m(2-t)^{m-1}.$$
  Therefore $\tau_1(G)=2$ and $\mu(G)=m-1$. Thus $\rp(K_{2,\ldots,2})=m.$
  
  Note that a minimal Euclidean representation of this graph is a regular $m$--dimensional cross--polytope. In Section 6 we consider a geometric method for complete multipartite graphs. 
  
  There is an obvious relation between polynomials $C_G(t)$ and $C_{\bar G}(t)$. Namely,  $C_{\bar G}(t)$ is the reciprocal polynomial of $C_G(t)$. If $G$ or $\bar G$ is not the complete multipartite graph,  then   $\tau_0(G):=1/\tau_1(\bar G)$ is a root of $C_G(t)$ and there are no more roots in the interval $I:=[\tau_0(G),\tau_1(G)]$. Moreover, a two--distance set $S$ with distances 1 and $\sqrt{t}$ is well--defined only if $t\in I$ \cite{ES}. 
    
  In fact, if $\rp(G)\le n-2$, then a minimal Euclidean representation is unique up to isometry. Indeed, in this case $a=1$ and $b=\sqrt{\tau_1}$, then all distances between vertices in the representation are known.

  Using this approach Einhorn and Schoenberg \cite{ES} enumerated all two-distance sets in dimensions two and three. In other words, they enumerated all graphs $G$ with $\rp(G)=2$ and $\rp(G)=3$.  In \cite{musin2018} we state the same  problem in four dimensions. Recently, Sz\"oll\"osi \cite{Sz2018}   using a computer enumeration of graphs solved this problem.  


%

  
\section{Spherical representations of graphs} 

Let $f$ be a Euclidean representation of a graph $G$ with $n$ vertices in ${\Bbb R}^d$  as a two--distance set. 
We say that $f$ is a {\em spherical representation of $G$} if the image $f(G)$ lies on a $(d-1)$--sphere in ${\Bbb R}^d$.  We will call  the smallest $d$ such that $G$ is spherically representable in  ${\Bbb R}^d$ the {\em spherical representation number} of $G$ and denote it $\sr(G)$. 

Representation numbers $\sr(G)$ and $\rp(G)$ can be different. In Section 6 we show that if $G$ is a bipartite graph $K_{m,n}$ with $m\ne n$, then $$\rp(K_{m,n})=n+m-2<\sr(K_{m,n})=n+m-1.$$

For a graph $G$ on $n$ vertices we obviously have 
$$
\rp(G)\le \sr(G)\le n-1 \eqno (3.1)
$$

Actually, for spherical representation numbers  lower bound (2.1) can be a little bit improved. 
Delsarte, Goethals, and Seidel \cite{delsarte1977} proved that the largest cardinality of spherical two-distance sets in ${\Bbb R}^d$ is bounded by $d(d+3)/2$. (This upper bound is known to be tight for $d=2,6,22$.)  That yields 
$$
\sr(G)\ge\frac{\sqrt{8n+9}-3}{2}. 
$$

This bound has been improved for some dimensions.  Namely, in  \cite{musin2009} we proved that 
$$n\le \frac{d(d+1)}{2} \eqno (3.2)$$ 
for $6<d<22$ and  $23<d<40$. This inequality was  extended for almost all $d\le 93$ by Barg \& Yu \cite{barg2013} and  for $d\le 417$ by Yu \cite{Yu2016}. Recently, Glazyrin \& Yu  \cite{Glazyrin2016} proved (3.2) for all $d\ge 7$ with possible exceptions for some $d=(2k+1)^2-3$, $k\in {\mathbb N}$.  

\medskip

Let $S=\{p_1,\ldots,p_n\}$ be a  set in ${\Bbb R}^{n-1}$. As above, $d_{ij}:=\dist(p_i,p_j)$. Let 
$$
M_S:=
 \begin{vmatrix}
  0 & d_{12}^2 & ... & d_{1n}^2\\
 d_{21}^2 & 0 & ... & d_{2n}^2\\
\hdotsfor{4}\\
\hdotsfor{4}\\
 d_{n1}^2 & d_{n2}^2 & ... & \, 0
  \end{vmatrix}
  \eqno (3.3)
 $$
 It is well known \cite[Proposition 9.7.3.7]{Berger}, that if the points in $S$ form a simplex of dimension  $(n-1)$, then the radius $R$ of the sphere circumscribed around this simplex is given by 
 $$
 R^2=-\frac{1}{2}\frac{M_S}{C_S}. \eqno (3.4)
 $$
(Here $C_S$ is defined by (2.2).)

\begin{df} \label{df31}
Let $G$ be a graph with vertices $v_1,\ldots,v_n$. Put $d_{ij}:=1$ if $[v_iv_j]$ is an edge of $G$, otherwise put  $d_{ij}:=b$.   We denote by $C_G(t)$ and $M_G(t)$ the polynomials in $t=b^2$ that are defined by (2.2) and (3.3),   respectively.  Let 
$$
F_G(t):=-\frac{1}{2}\frac{M_G(t)}{C_G(t)}. 
$$
\end{df}


\begin{lemma} \label{L31}
Let $S$ be a spherical representation of a graph $G$ with distances $a$ and $b$, $b\ge a$. Then $S$ lies on a sphere of radius $R=\sqrt{a^2F_G(b^2/a^2)}$.
\end{lemma}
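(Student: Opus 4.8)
The plan is to normalize to the case $a=1$ and then read off the radius from the circumradius formula (3.4). First I would rescale: if $S$ has distances $a$ and $b$, then $S':=a^{-1}S$ is a two--distance set with distances $1$ and $b/a$, its circumradius is $R'=R/a$, and because the determinants in (2.2) and (3.3) depend only on the matrix of squared pairwise distances, Definition \ref{df31} gives $C_{S'}=C_G(t)$ and $M_{S'}=M_G(t)$ at $t=(b/a)^2=b^2/a^2$. Hence it suffices to prove $R'^2=F_G(t)$, after which unscaling yields $R^2=a^2R'^2=a^2F_G(b^2/a^2)$, which is the assertion.

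Next I would apply (3.4) to $S'$. If the $n$ points of $S'$ affinely span $\mathbb{R}^{n-1}$, i.e.\ they form a nondegenerate $(n-1)$--simplex, then (3.4) applies verbatim and gives
$$R'^2=-\frac{1}{2}\,\frac{M_{S'}}{C_{S'}}=-\frac{1}{2}\,\frac{M_G(t)}{C_G(t)}=F_G(t),$$
as required. This already covers every representation whose parameter $t$ lies in the interior of the admissible range, where $C_G(t)\neq0$ and the simplex is full--dimensional (and therefore automatically cospherical).

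The remaining, and main, difficulty is the degenerate case, in which the spherical representation lives in dimension $d<n-1$ (for example the minimal representation at $t=\tau_1$). There the points do not span $\mathbb{R}^{n-1}$, so $C_G(t)=0$; moreover, writing the Gram matrix taken about the sphere center as $R'^2J-\tfrac{1}{2}(d_{ij}^2)$ shows this matrix has rank at most $d+1\le n-1$, so the squared--distance matrix $(d_{ij}^2)$ has rank at most $n-1$ and $M_G(t)=0$ as well. Thus $F_G(t)$ is a priori the indeterminate form $0/0$, and I would resolve it by continuity: as $t\to\tau_1^-$ the full--dimensional simplices $S'_t$ flatten onto the (unique, by Einhorn--Schoenberg) minimal representation, their circumradii $\sqrt{F_G(t)}$ converge to $R'$, and $F_G(\tau_1)$ is recovered as the limit obtained after cancelling the common factor $(t-\tau_1)^{\mu(G)}$ from $M_G$ and $C_G$. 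The crux will be to show that $\tau_1$ is a root of $M_G$ of the \emph{same} multiplicity as of $C_G$, which is precisely what makes this limit finite and equal to $R'^2$; when instead the multiplicity of $\tau_1$ in $M_G$ is smaller, the quotient diverges and the limiting configuration lies on a hyperplane rather than a genuine sphere, i.e.\ exactly the non--spherical situation excluded by the hypothesis of the lemma.
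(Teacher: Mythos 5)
Your proposal follows essentially the same route as the paper's proof of Lemma~\ref{L31}: when $\rk(S)=n-1$ both arguments apply (3.4) verbatim, and in the degenerate case both pass to a limit of nondegenerate configurations. Your write--up is actually more explicit on two points the paper leaves silent: the reduction from general $a$ to $a=1$ (which is exactly what produces the factor $a^2$ in the statement), and the observation that in the degenerate case $M_G$ vanishes together with $C_G$, so that $F_G$ must be read as a rational function in lowest terms. The only structural difference is how the limit is taken: the paper approximates $S$ by \emph{arbitrary} general--position sets $X_k$ and asserts (``clearly'') that the circumradius is continuous, whereas you run the limit along the two--distance family $S'_t$, $t\to\tau_1^-$, which is the more natural choice since it directly identifies the limiting circumradius with the limiting value of $F_G$ at $\tau_1$. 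One logical clarification on your final paragraph: the equal--multiplicity statement you defer as ``the crux'' is a \emph{consequence}, not a prerequisite, of the geometric claim. Once the circumradii $\sqrt{F_G(t)}$ of the flattening simplices are known to converge to the finite, positive radius $R'$ of the spherical limit set, the rational function $-M_G/(2C_G)$ has a finite nonzero limit at $\tau_1$, which forces the multiplicities to agree (the paper records precisely this remark inside the proof of Theorem~\ref{cor21}); conversely, equal multiplicities alone would give finiteness of the limit but not its identification with $R'^2$. So the single step that genuinely needs justification---in your argument and equally in the paper's---is the continuity of the circumradius at a degenerate configuration lying on a sphere; the paper asserts this without proof, and your proposal is complete to exactly the same degree.
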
 
\begin{proof} If $X=\{x_1,\ldots,x_n\}$ is a set of points in ${\mathbb R}^{n-1}$  in general position, then $\rk(X)=n-1$, $\conv(X)$ is a simplex and (3.4) determines the circumradius $R(X)$ of $\conv(X)$. Clearly,  $R(X)$  is a continues function in $\{x_i\}$. 

We have  that $\rk(S)\le n-1$. If $\rk(S)=n-1$, then (3.4) implies the lemma, otherwise consider a sequence of sets $\{X_k\},$ $k\in{\mathbb N}$,  in ${\mathbb R}^{n-1}$  in general position such that $S$ is a limit set  of this sequence.  Thus, $R(S)$ is the limit of $\{R(X_k)\}$, $k\in{\mathbb N}$. 
\end{proof}

As we noted above, if  $\rk(S)<n-1$ and $a=1$, then a spherical  (and Euclidean) representation of $G$ is uniquely defined up to isometry. However, if $\rk(S)=n-1$, then there are infinitely many non--isometric spherical representations. This is easy to see, let $S$ be the set of vertices of a simplex in which one of edges has length $b\ge 1$ and all other edges are of lengths $a=1$. It can be proved (see the next section) that the range of $R(S)$ is $[1/\sqrt{2},\infty)$.  This fact and Lemma \ref{L31} explain our definition of the circumradius of $G$.

\begin{df} \label{DefR} 
If $G$ is a graph with $\tau_1(G)<\infty$ and $F_G(\tau_1)<\infty$, then denote 
$${\mathcal R}(G):=\sqrt{F_G(\tau_1)}.$$ Otherwise, put ${\mathcal R}(G):=\infty$. 

\end{df} 


\begin{theorem} \label{cor21}  Let $G$ be a graph on $n$ vertices. Then 
$$  \sr(G)=\left\{
\begin{array}{l}
\rp(G) \, \mbox{ if } \; {\mathcal R}(G)<\infty;\\
n-1 \quad \; \, \mbox{ if } \;  {\mathcal R}(G)=\infty. 
\end{array} 
\right.
$$
\end{theorem}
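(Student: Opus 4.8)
The plan is to compare $\sr(G)$ with the Euclidean number $\rp(G)=n-\mu(G)-1$ by first determining which dimensions admit a two-distance representation of $G$ at all, and then deciding sphericity in each. Since (3.1) already gives $\rp(G)\le\sr(G)\le n-1$, it suffices to analyse the minimal representation; set $a=1$, so a representation is pinned down by $t=b^2>1$. If $\mu(G)=0$ then $\tau_1=\infty$, so $\mathcal R(G)=\infty$ by Definition \ref{DefR} while $\rp(G)=n-1$ by Theorem \ref{thER}, and (3.1) forces $\sr(G)=n-1$, as claimed. So assume $\mu:=\mu(G)\ge1$ and $d:=\rp(G)=n-\mu-1\le n-2$. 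The structural heart is a dimension dichotomy: a two-distance representation of $G$ exists exactly for $t\in(1,\tau_1]$, has dimension $n-1$ for $t\in(1,\tau_1)$ (there $C_G(t)\ne0$, so the determinant (2.2) is nonzero and the $n$ points form a nondegenerate $(n-1)$-simplex) and dimension exactly $d$ for $t=\tau_1$ (the Einhorn--Schoenberg multiplicity count, \cite[Lemma 6]{ES}); no intermediate dimension occurs because $C_G$ has no root in $(1,\tau_1)$. Hence any representation below dimension $n-1$ is the unique minimal one $S=S_{\tau_1}$, and the whole theorem reduces to deciding whether $S$ is spherical.

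One implication is immediate from Lemma \ref{L31}: a spherical $S$ would lie on a sphere of radius $\sqrt{F_G(\tau_1)}$, so $\mathcal R(G)=\infty$ (equivalently $F_G(\tau_1)=\infty$) precludes sphericity of $S$. Combined with the dichotomy, when $\mathcal R(G)=\infty$ and $\mu\ge1$ no representation below dimension $n-1$ is spherical, whereas every $S_t$ with $t\in(1,\tau_1)$ is a full-dimensional (hence automatically circumscribed) spherical representation; therefore $\sr(G)=n-1$.

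The substantive half is the converse. Assuming $\mathcal R(G)=\sqrt{F_G(\tau_1)}<\infty$, I must exhibit a genuine circumsphere of $S$ inside $\aff(S)$, and I would do this by continuity along the family $\{S_t\}_{t\in(1,\tau_1)}$. Each simplex $S_t$ has a circumcenter $c_t\in\aff(S_t)$ obtained from a linear system whose coefficients are polynomials in $t$, so $c_t$ is rational in $t$, and its radius satisfies $R(t)^2=F_G(t)$ by (3.4). Since $F_G(t)\to F_G(\tau_1)<\infty$ as $t\to\tau_1^-$ and the vertices converge to those of $S$, the estimate $|c_t|\le R(t)+|p_1(t)|$ keeps $\{c_t\}$ bounded, so the rational center has a finite limit $c$ with $|c-p_i|=\sqrt{F_G(\tau_1)}$ for every vertex $p_i$ of $S$; projecting $c$ orthogonally onto $\aff(S)$ preserves equidistance, so $S$ lies on a $(d-1)$-sphere. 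Thus $\sr(G)\le d=\rp(G)$, and with (3.1) we obtain $\sr(G)=\rp(G)$.

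I expect this converse to be the main obstacle: Lemma \ref{L31} supplies only that sphericity implies the radius formula, so one must show that a finite limiting circumradius actually forces cosphericity, i.e.\ that the centers $c_t$ do not escape to infinity in the directions orthogonal to $\aff(S)$ as $S_t$ degenerates. Controlling this degeneration so that the limit center lands in $\aff(S)$ and yields a sphere of positive radius is the delicate point; the remaining borderline cases (notably the complete multipartite graphs, where $C_G$ factors explicitly) can be verified directly against the stated formula.
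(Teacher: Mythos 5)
Your proposal is correct, and its skeleton is the same as the paper's: the Einhorn--Schoenberg dimension count reduces everything to the unique minimal representation $S_{\tau_1}$, and sphericity is decided by the behaviour of $F_G(t)=R^2(t)$ along the family $S_t$ as $t\to\tau_1$, together with Lemma \ref{L31}. The genuine difference lies in the direction ${\mathcal R}(G)<\infty\Rightarrow\sr(G)=\rp(G)$. The paper's proof explicitly argues only the implication ``$\sr(G)\le n-2$ implies $F_G(\tau_1)=\rho^2<\infty$ and $\sr(G)=\rp(G)$'', whose contrapositive settles the case ${\mathcal R}(G)=\infty$; the converse --- that a \emph{finite} limiting value of the rational function $F_G$ at $\tau_1$ forces the degenerate configuration $S_{\tau_1}$ to actually lie on a sphere --- is dispatched there by an appeal to continuity. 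That is exactly the delicate point you isolate (circumcentres of degenerating simplices can escape to infinity, which is what happens when ${\mathcal R}(G)=\infty$), and your bounded-circumcentre argument --- $|c_t|\le R(t)+|p_1(t)|$ stays bounded, pass to a limit $c$, equidistance survives the limit, then project $c$ orthogonally onto $\aff(S_{\tau_1})$ to get a sphere of positive radius inside the affine hull --- is a complete and correct way to close this half; it supplies a step the paper's own write-up leaves implicit. Two minor remarks: the claimed rationality of $c_t$ in $t$ depends on choosing the embeddings $f_t$ rationally, but you never need it, since boundedness plus a convergent subsequence already suffices; and your restriction to $t\in(1,\tau_1)$, approaching $\tau_1$ only from below, is in fact cleaner than the paper's two-sided interval $[\tau_1-\varepsilon,\tau_1+\varepsilon]$, because by Einhorn--Schoenberg no real configuration exists for $t>\tau_1$.
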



\begin{proof}  Denote by $I_\varepsilon$ a small interval $[\tau_1-\varepsilon,\tau_1+\varepsilon]$  that does not contain any other roots of $C_G$ and $M_G$.  Then for every $t$ in $I_\varepsilon$, $t\ne \tau_1$, the Cayley--Menger determinant (2.2) is non--zero. Therefore, it defines a Euclidean (spherical) representation $f_t$ of $G$ in ${\Bbb R}^{n-1}$. Let $S_t:=\{f_t(v_i)\}$, where $v_i$ are the vertices of $G$.   Lemma \ref{L31} implies that  
$F_G(t)=R^2(t),$ 
where $R(t)$ is the radius of the sphere circumscribed about $S_t$. 

 From (3.1) it follows that  $\rp(G)=n-1$ yields $\sr(G)=n-1$. If  $\rp(G)\le n-2$, then $\mu(G)\ge 1$. Therefore,   for $t=\tau_1$,  Theorem \ref{thER}  implies that $S_t$ is embedded into ${\Bbb R}^{n-\mu-1}$.

Suppose $\sr(G)\le n-2$. Then (3.1) implies that $\rp(G)\le n-2$.  In this case a minimal spherical representation of $G$ is uniquely defined by $\tau_1$ and $S_{\tau_1}$ is a spherical set that lies on a sphere of radius $\rho>0$. Then $R(t)$ and $F_G(t)$ are continuous functions in $t$ that are well defined for all $t$ in $I_\varepsilon$  and $F_G(\tau_1)=\rho^2$.  It is easy to see that the inequlality $F_G(\tau_1)>0$  yields that the multiplicities of $\tau_1$ in $C_G$ and $M_G$ are equal. Thus, we have $\sr(G)=\rp(G)$. 
\end{proof}


 \section{ J--spherical representation of graphs}

In this section we prove that ${\mathcal R}(G)\ge1/\sqrt{2}$ and then we consider  the boundary case ${\mathcal R}(G)=1/\sqrt{2}$. 

For a proof  of the next theorem we need Rankin's theorem.  Rankin \cite{Rankin} proved that \\{\em If $S$ is a set of $d + k$, $k\ge 2$, points in the unit sphere ${\mathbb S}^{d-1}$ in ${\mathbb R}^{d}$, then two of the points in $S$ are at a distance of at most $\sqrt{2}$ from each other.}

\begin{theorem} \label{th23}   ${\mathcal R}(G)\ge1/\sqrt{2}$. 
\end{theorem}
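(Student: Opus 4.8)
The plan is to reduce $\mathcal{R}(G)\ge 1/\sqrt2$ to a single application of Rankin's theorem to the \emph{minimal} spherical representation produced in the previous section. First I would dispose of the trivial case: if $\mathcal{R}(G)=\infty$ there is nothing to prove, so assume $\mathcal{R}(G)<\infty$. By Definition \ref{DefR} this means $\tau_1(G)<\infty$ and $F_G(\tau_1)<\infty$. Since $\tau_1$ is by definition a root of $C_G$ strictly greater than $1$, its multiplicity obeys $\mu:=\mu(G)\ge 1$, whence Theorem \ref{thER} gives $d:=\rp(G)=n-\mu-1\le n-2$. The crucial bookkeeping consequence is the point count $n=d+\mu+1\ge d+2$.

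Next I would argue that the minimal representation $S:=S_{\tau_1}$, which lives in $\mathbb{R}^d$ and affinely spans it, actually lies on a $(d-1)$-sphere of radius exactly $\mathcal{R}(G)$. This is precisely where $F_G(\tau_1)<\infty$ is used. Running the limit argument underlying Lemma \ref{L31}, one perturbs $\tau_1$ to nearby $t$ for which $S_t$ is a nondegenerate $(n-1)$-simplex with circumradius $R(t)=\sqrt{F_G(t)}$ and circumcenter $c(t)$. Finiteness of $\lim_{t\to\tau_1}R(t)=\sqrt{F_G(\tau_1)}$ keeps the distances $R(t)$, and hence the centers $c(t)$, bounded, so a subsequence of the $c(t)$ converges to a point equidistant from every vertex of $S$; thus $S$ is spherical with circumradius $\sqrt{F_G(\tau_1)}=\mathcal{R}(G)$. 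Because $S$ contains two distinct points at distance $1$, this radius is positive, so $S$ genuinely lies on a $(d-1)$-sphere in $\mathbb{R}^d$.

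The final step is purely metric. Rescaling $S$ by the factor $1/\mathcal{R}(G)$ places it on the unit sphere $\mathbb{S}^{d-1}\subset\mathbb{R}^d$ and multiplies every pairwise distance by $1/\mathcal{R}(G)$; in particular the minimum distance $a=1$ becomes $1/\mathcal{R}(G)$. Since the rescaled set has $|S|=n=d+k$ points with $k=\mu+1\ge 2$, Rankin's theorem yields two of them at distance at most $\sqrt2$. As $1/\mathcal{R}(G)$ is the \emph{smallest} pairwise distance in the rescaled set, this forces $1/\mathcal{R}(G)\le\sqrt2$, i.e. $\mathcal{R}(G)\ge 1/\sqrt2$.

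I expect the main obstacle to be the middle paragraph: rigorously certifying that $S_{\tau_1}$ is spherical with circumradius precisely $\mathcal{R}(G)$ and full affine dimension $d$, so that Rankin's hypotheses (at least $d+2$ points lying on $\mathbb{S}^{d-1}$) are genuinely met. Both the sphericity and the count $n\ge d+2$ rest on the single identity $n=d+\mu+1$ together with $\mu\ge 1$, which is exactly what $\mathcal{R}(G)<\infty$ supplies; once these are in hand, Rankin's bound closes the argument with no further computation.
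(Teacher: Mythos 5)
Your proposal is correct and is essentially the paper's proof: both arguments reduce the statement to a single application of Rankin's theorem to the minimal representation $S_{\tau_1}$, which consists of $n=d+\mu+1\ge d+2$ points with minimum distance $1$ lying on a sphere of radius ${\mathcal R}(G)$, where $d=\rp(G)\le n-2$. The only real difference is that your middle paragraph re-derives the sphericity of $S_{\tau_1}$ from $F_G(\tau_1)<\infty$ via limiting circumcenters (content the paper simply delegates to Theorem \ref{cor21}); note there that the limit center need not lie in $\aff(S_{\tau_1})$, but its orthogonal projection onto $\aff(S_{\tau_1})$ yields a sphere in ${\mathbb R}^{d}$ of radius at most ${\mathcal R}(G)$ containing $S_{\tau_1}$, which is all that the Rankin step requires, so your conclusion stands.
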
 
\begin{proof} Let $G$ be a graph on $n$ vertices. By the definition if $\sr(G)= n-1$, then ${\mathcal R}(G)=\infty$. 

 Let $S$ be  a minimal spherical representation of $G$. If  $\sr(G)\le n-2$, then $S$ lies in a sphere $\Omega$ in ${\mathbb R}^{n-2}$ of radius $R$. By Rankin's theorem  if  $d+2$ points lie
in a sphere of radius $R$ in ${\mathbb R}^{d}$, then a ratio $a/R\le\sqrt{2}$, where $a$ is the minimum distance between these points. Since $a=1$, we have ${\mathcal R}(G)=R\ge1/\sqrt{2}$. 
 \end{proof}

Hence we have a two--distance set $X$ with distances $a=1$ and $b> a$ such that the circumradius of $X$ is $1/\sqrt{2}$.  Actually, we will consider a set $S$ that is similar to $X$  with the scale factor $\sqrt{2}$.  Therefore, $S$ is a two-distance set with the first distance   $a=\sqrt{2}$ that can be inscribed in the unit sphere.

\begin{df} Let $f$ be a spherical representation of a graph $G$  in ${\Bbb R}^d$  as a two distance set. 
We say that $f$ is a {J--spherical representation of $G$} if the image $f(G)$ lies in the unit sphere ${\mathbb S}^{d-1}$ and the first  (minimum) distance $a=\sqrt{2}$.  
\end{df}

The existence of Euclidean and spherical representations  for any graph $G$ is obvious. However, to prove it for J--spherical representations is not very easy. Clearly, if\, $G$ is a complete graph $K_n$, then this representation does not exist. We show that this is just one exceptional case, and for every other $G$ there is a J--spherical representation. 

\medskip

\noindent{\bf Notation}. {\em Let $G$ be a graph on $n$ vertices. \\ 
$I_G:=\left(\sqrt{2},\sqrt{2\tau_1(G)}\right)$.\\  
$S_G(x) :$  a  two--distance set $S$ in ${\mathbb R}^{n-1}$ with distances $a=\sqrt{2}$ and $b=x$ such that
$\Gamma(S)=G$.   (Here, as above, $\Gamma(S)$ is the graph with edges of length $a$.) \\
$\Delta_G(x):=\conv{S_G(x)}$. \\
$\Phi_G(x) :$ the radius of the minimum enclosing ball of $S_G(x)$  in\, ${\mathbb R}^{n-1}$.
}

\begin{lemma} \label{lemma31} If \, $x\in I_G$, then $\rk{S_G(x)}=n-1$. 
\end{lemma}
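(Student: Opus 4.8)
The plan is to translate the rank statement into the nonvanishing of the Cayley--Menger determinant $C_G$ and then read off the conclusion from the parameter interval. First I would normalize the scale. The set $S_G(x)$ has minimum distance $a=\sqrt2$ and second distance $b=x$; dividing every coordinate by $\sqrt2$ turns it into a two--distance set with $a=1$ and second distance $x/\sqrt2$, which in the notation of Section 2 corresponds to the parameter $t=b^2/a^2=x^2/2$. Under this dictionary the condition $x\in I_G=\left(\sqrt2,\sqrt{2\tau_1(G)}\right)$ becomes exactly $t\in(1,\tau_1(G))$. Since $\rk$ is invariant under similarity, it suffices to prove that the $n$ points realizing distances $1$ and $\sqrt t$ affinely span ${\mathbb R}^{n-1}$ for every such $t$.

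The equivalence I would use is that, for a realizable distance matrix, the $n$ points form a nondegenerate $(n-1)$--simplex (that is, $\rk=n-1$) if and only if $C_G(t)\neq0$; this is the Cayley--Menger criterion, since $(-1)^nC_S$ is a positive multiple of the squared $(n-1)$--volume of $\conv(S)$ (equivalently, via $C_G=(-1)^nD$, the discriminating/Gram determinant is nonzero exactly when the spanning vectors are linearly independent). So the whole lemma reduces to showing $C_G(t)\neq0$ for $t\in(1,\tau_1)$. This is immediate from the definitions: by definition $\tau_1(G)$ is the smallest root of $C_G$ that exceeds $1$, so $C_G$ has no zero in the open interval $(1,\tau_1)$; and at $t=1$ all pairwise distances equal $1$, so $S_G$ is a regular $(n-1)$--simplex and $C_G(1)\neq0$. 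Hence $C_G(t)\neq0$ throughout $[1,\tau_1)$, in particular at $t=x^2/2$.

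The only step needing genuine care is realizability: to run the Cayley--Menger criterion backwards I must know that for $t\in(1,\tau_1)$ the prescribed distances are actually realized by points of ${\mathbb R}^{n-1}$, and I expect this to be the main obstacle. I would dispatch it in one of two ways. The clean way is to quote Einhorn--Schoenberg \cite{ES}: the two--distance set with distances $1$ and $\sqrt t$ exists precisely for $t\in[\tau_0(G),\tau_1(G)]$ with $\tau_0=1/\tau_1(\bar G)<1$, so our $t\in(1,\tau_1)$ lies strictly inside the realizability range, and then $C_G(t)\neq0$ forces full rank. The self--contained way is a continuity argument along $t\in[1,\tau_1)$: at $t=1$ the configuration is the regular simplex, which is realizable and of full rank $n-1$; as $t$ increases the distance matrix varies continuously while $C_G(t)$ never vanishes, so the sign $(-1)^nC_G(t)>0$ is preserved and the configuration can neither degenerate nor leave the realizable cone, whence $\rk S_G(x)=n-1$ for all $t\in(1,\tau_1)$.
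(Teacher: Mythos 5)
Your proof is correct and follows essentially the same route as the paper: after the scale normalization $t=x^2/2$, both arguments reduce the rank claim to the nonvanishing of $C_G(t)$ on $(1,\tau_1(G))$, using the fact that the Cayley--Menger determinant is proportional to the squared volume of the simplex. Your extra care about realizability (via the Einhorn--Schoenberg interval $[\tau_0,\tau_1]$) addresses a point the paper leaves implicit in its definition of $S_G(x)$, but it does not change the substance of the argument.
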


\begin{proof}
Since the Cayley--Menger determinant  and the volume of a simplex are equal up to a constant and  $C_G(x^2/2)\ne0$ for $x\in I_G$,  we have that $\Delta_G(x)$ is a  simplex in  ${\mathbb R}^{n-1}$ of dimension $n-1$. Thus,  $\rk{S_G(x)}=\dim{\Delta_G(x)}=n-1$. 
\end{proof}

\begin{lemma} \label{lemma32} The function  $\Phi_G(x)$ is increasing on $I_G$. 
\end{lemma}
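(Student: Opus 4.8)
The plan is to reduce the claim to the elementary monotonicity of a pointwise maximum of affine functions, by rewriting $\Phi_G(x)^2$ through the standard minimax (dual) description of the smallest enclosing ball. For any finite set $S=\{p_1,\dots,p_n\}$ the radius $\rho$ of its minimum enclosing ball satisfies $\rho^2=\min_c\max_i\|c-p_i\|^2$. Replacing $\max_i$ by a maximum over weight vectors $\lambda=(\lambda_1,\dots,\lambda_n)$ with $\lambda_i\ge0$, $\sum_i\lambda_i=1$ (the maximum of a linear form on the simplex is attained at a vertex) and swapping $\min_c$ with $\max_\lambda$ by Sion's theorem (the inner function is convex in $c$ and affine in $\lambda$, over a compact $\lambda$-domain), I get $\rho^2=\max_\lambda\min_c\sum_i\lambda_i\|c-p_i\|^2$. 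The inner minimum is attained at the weighted centroid $\sum_i\lambda_i p_i$, and a short computation collapses the result to the pairwise squared distances alone: $\rho^2=\max_\lambda\tfrac12\sum_{i,j}\lambda_i\lambda_j\|p_i-p_j\|^2$.

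Applying this to $S_G(x)$, whose squared distances are $2$ across edges and $x^2$ across non--edges, I obtain
$$\Phi_G(x)^2=\max_\lambda\big(2\,A(\lambda)+x^2\,B(\lambda)\big),\qquad A(\lambda):=\sum_{\{i,j\}\in E(G)}\lambda_i\lambda_j,\qquad B(\lambda):=\sum_{\{i,j\}\notin E(G)}\lambda_i\lambda_j,$$
the two sums running over edges and non--edges of $G$. Writing $s=x^2$, each map $s\mapsto 2A(\lambda)+sB(\lambda)$ is affine with slope $B(\lambda)\ge0$, so the pointwise maximum $\phi(s):=\Phi_G(\sqrt s)^2$ is convex and nondecreasing; in particular $\Phi_G$ is at least nondecreasing on $I_G$, and since $\phi$ is defined and convex on all of $\mathbb{R}$ it is continuous, giving $\phi(s)\ge\phi(2)$ for every $s\ge2$.

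The substantive point, and the step I expect to be the main obstacle, is strict monotonicity: I must exclude a maximizer $\lambda^\ast$ with $B(\lambda^\ast)=0$, since a maximizer supported on a clique would make $\phi$ locally flat. Here the value at the left endpoint does the work. At $s=2$ one has $2A+2B=1-\sum_i\lambda_i^2$, so $\phi(2)=1-1/n$ (the squared circumradius of the regular simplex). If some maximizer had $B(\lambda^\ast)=0$, its support would be a clique, of size at most $\omega(G)\le n-1$ because $G\ne K_n$, whence $\phi(s)=2A(\lambda^\ast)=1-\sum_i(\lambda_i^\ast)^2\le 1-1/\omega(G)<1-1/n=\phi(2)$, contradicting $\phi(s)\ge\phi(2)$. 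Hence every maximizer satisfies $B(\lambda^\ast)>0$, so the affine function $2A(\lambda^\ast)+sB(\lambda^\ast)$ is a supporting minorant of $\phi$ with strictly positive slope, forcing $\phi'_+(s)\ge B(\lambda^\ast)>0$. Thus $\phi$, and therefore $\Phi_G$, is strictly increasing on $I_G$. The only delicate inputs are the justification of the minimax interchange and the clique bound that rules out flat pieces; both become routine once the endpoint identity $\phi(2)=1-1/n$ and the hypothesis $G\ne K_n$ are in hand.
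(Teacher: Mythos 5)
Your proof is correct, and it takes a genuinely different route from the paper's. The paper's argument is geometric, via Kirszbraun's extension theorem: for $y_1<y_2$ in $I_G$ the vertex correspondence $S_G(y_2)\to S_G(y_1)$ is $1$--Lipschitz, it extends to a $1$--Lipschitz map of ${\mathbb R}^{n-1}$, and the image of the center of the minimum enclosing ball of $S_G(y_2)$ is a point within distance $\Phi_G(y_2)$ of every point of $S_G(y_1)$, whence $\Phi_G(y_1)\le\Phi_G(y_2)$. You instead use the dual (minimax) characterization of the smallest enclosing ball, $\Phi_G(x)^2=\max_\lambda\bigl(2A(\lambda)+x^2B(\lambda)\bigr)$ over the probability simplex; the variance identity, the reduction of $\max_i$ to $\max_\lambda$, and the minimax interchange (Sion, with the compact simplex on the maximizing side) are all sound, and monotonicity in $s=x^2$ is then immediate since each competitor is affine with slope $B(\lambda)\ge0$. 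Two comparative remarks. First, note what the paper actually needs: its Lemma \ref{lemma32} is only weak monotonicity, and in the proof of Theorem \ref{Jrep} the author explicitly concedes that strictness was not proved and supplies a separate argument (rationality of $\Phi_G^2$ as a function of $x^2$ when the circumcenter is interior) to rule out flat pieces. Your clique-support argument --- a maximizer with $B(\lambda^\ast)=0$ is supported on a clique of size at most $n-1$, forcing $\phi(s)\le 1-1/(n-1)<1-1/n=\phi(2)$, a contradiction with $\phi(s)\ge\phi(2)$ --- yields strict monotonicity outright for $G\ne K_n$, which is strictly stronger than the lemma and would let one bypass that extra step in Theorem \ref{Jrep}. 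The hypothesis $G\ne K_n$ is genuinely needed for strictness (for $G=K_n$ all $B(\lambda)$ vanish and $\Phi_G$ is constant), and it is absent from the lemma as stated; this is harmless, since your first part establishes the nondecreasing property, which is all the lemma asserts, for every $G$. Second, your method gives convexity of $\Phi_G^2$ in $x^2$ for free, which is directly relevant to the conjectures on monotonicity and convexity in Subsection 7.2 of the paper, whereas the Kirszbraun argument gives no such information.
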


\begin{proof}
The proof relies on the Kirszbraun theorem (see \cite{Kirsz, AT})\footnote{The author thanks Arseniy Akopyan for the idea of this proof.}: 

\medskip

 {\em Let $X$ be a subset of ${\mathbb R}^d$ and $f:X\to {\mathbb R}^m$ be a Lipschitz function. Then $f$ can be extended to the whole ${\mathbb R}^d$ keeping the Lipschitz constant of the original function.}
 
 \medskip

Let $\sqrt{2}\le y_1<y_2<\sqrt{2\tau_1(G)}$. Then by Lemma \ref{lemma31} $S_G(y_i)=\{v_{i1},\ldots,v_{in}\}$ is the set of vertices of an $(n-1)$--simplex $\Delta_G(y_i)$  that lies in the minimum enclosing ball $B(y_i)$ of radius $\Phi_G(y_i)$. 

Let 
$$
h(v_{2k}):=v_{1k}, \; k=1,\ldots,n. 
$$
Then we have $h:S_G(y_2)\to{\mathbb R}^{n-1}$. It is clear that the Lipschitz constant of $h$ is equal to 1.  By the Kirszbraun theorem   $h$ can be extended to $H:{\mathbb R}^{n-1}\to{\mathbb R}^{n-1}$ with the same  Lipschitz constant.

 Let $c_2$ be the center of $B(y_2)$.  For all $k=1,\ldots, n$ we have 
 $$
 \dist(H(c_2),H(v_{2k}))= \dist(H(c_2),v_{1k})\le \dist(c_2,v_{2k})\le \Phi_G(y_2). 
 $$
 Therefore, $H(c_2)$ is a point in $\Delta_G(y_1)$ such that all distances from $H(c_2)$ to vertices $S_G(y_1)$ does not exceed $\Phi_G(y_2)$. Then $\Phi_G(y_1)\le \Phi_G(y_2)$.  
\end{proof}

 
\begin{lemma} \label{prop1} Let $S$ be a set in \, ${\mathbb R}^{n-1}$ of cardinality $|S|\ge n$. Suppose the minimum distance between points of $S$ is at least $\sqrt{2}$. If $S$ lies in a sphere of radius $R\le1$, then sphere's center $O\in\conv(S)$. 
\end{lemma}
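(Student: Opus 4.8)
The plan is to argue by contradiction, converting the geometric hypotheses into a statement about inner products and then deducing a forced linear independence that violates the cardinality bound. First I would place the center $O$ at the origin, so that every $p\in S$ has $\|p\|=R\le1$. The minimum-distance hypothesis gives, for distinct $p,q\in S$,
$$2\le \dist(p,q)^2=\|p\|^2+\|q\|^2-2\langle p,q\rangle=2R^2-2\langle p,q\rangle,$$
hence $\langle p,q\rangle\le R^2-1\le0$. Thus, viewed from $O$, the points of $S$ form a family of vectors with pairwise non-positive inner products (and they are genuinely distinct, since the minimum distance is positive).

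Next, suppose for contradiction that $O\notin\conv(S)$. Since $S$ is finite, $\conv(S)$ is compact, and the separating hyperplane theorem yields a vector $u\ne0$ with $\langle u,p\rangle>0$ for every $p\in S$; that is, all points lie strictly inside an open halfspace whose bounding hyperplane passes through $O$.

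The key step is the claim that a family of vectors with pairwise non-positive inner products that all lie in such an open halfspace must be linearly independent. Given a dependence $\sum_i c_ip_i=0$, I would split the indices according to the sign of $c_i$ and set $w:=\sum_{c_i>0}c_ip_i=\sum_{c_j<0}(-c_j)p_j$. Expanding $\|w\|^2$ as the inner product of these two sums, every cross term is a product of two positive coefficients with a non-positive inner product $\langle p_i,p_j\rangle$ (the index sets are disjoint, so $i\ne j$), whence $\|w\|^2\le0$ and $w=0$. Pairing $w=0$ with $u$ then forces every positive coefficient, and symmetrically every negative coefficient, to vanish, since each $\langle u,p_i\rangle>0$; so the dependence is trivial.

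Consequently $S$ consists of at most $n-1$ linearly independent vectors in $\mathbb{R}^{n-1}$, contradicting $|S|\ge n$, and therefore $O\in\conv(S)$. I expect the linear-independence lemma in the middle to be the only substantive point; the passage from the distance and radius bounds to $\langle p,q\rangle\le0$ and the separation argument are routine.
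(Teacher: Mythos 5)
Your proof is correct, and it takes a genuinely different route from the paper at the key step. The paper's own proof is two lines: assume $O\notin\conv(S)$, conclude (by separation) that $S$ lies in an open hemisphere of the sphere, and then cite external results on spherical codes in caps (Theorem~3 of Musin's kissing-number paper or Theorem~5 of Barg--Musin) to get $|S|<n$, a contradiction. You share the same opening move --- contradiction plus a separating hyperplane through $O$ --- but instead of outsourcing the cardinality bound, you prove it on the spot: the radius and distance hypotheses give $\langle p,q\rangle\le R^2-1\le 0$ for distinct $p,q\in S$, and then the classical lemma that vectors with pairwise non-positive inner products lying in a common open halfspace are linearly independent (proved correctly via the sign-splitting argument $w=\sum_{c_i>0}c_ip_i=\sum_{c_j<0}(-c_j)p_j$, $\|w\|^2\le 0$, then pairing with the separating normal $u$) caps $|S|$ at $\dim\mathbb{R}^{n-1}=n-1$. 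What you buy is a short, self-contained, purely linear-algebraic proof; what the paper's citation buys is brevity and a connection to stronger cap-code bounds that hold for caps of arbitrary angular radius, not just open hemispheres. One cosmetic remark: you assert ``$S$ is finite'' as if given; it is not a hypothesis, but it follows instantly since a set with pairwise distances at least $\sqrt{2}$ inside a compact sphere is finite (or, alternatively, apply your argument to any $n$ points of $S$), so this is not a gap.
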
      
\begin{proof} Assume the converse. Then $S$ lies in an open hemisphere of radius $R$.    It can be proved (see \cite[Theorem 3]{Mus24} or   \cite[Theorem 5]{BM}) that the assumptions yield $|S|<n$, a contradiction.   
\end{proof}


\begin{theorem} \label{Jrep}  
Let  $G$ be a graph with $n$ vertices. Let $R: \sqrt{(n-1)/n} < R\le 1$. Suppose $G\ne K_n$, then there is a unique $x\in I_G$ such that  $S_G(x)$ lies on a sphere of radius $R$. 
\end{theorem}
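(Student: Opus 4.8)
The plan is to reduce the statement to a one--variable monotonicity problem for the circumradius. By Lemma \ref{lemma31}, for every $x\in I_G$ the set $S_G(x)$ is the vertex set of a full--dimensional simplex in ${\mathbb R}^{n-1}$, so it lies on exactly one sphere, namely its circumscribed sphere. Hence the phrase ``$S_G(x)$ lies on a sphere of radius $R$'' is equivalent to $r(x)=R$, where $r(x)$ denotes the circumradius of $S_G(x)$. Applying Lemma \ref{L31} with $a=\sqrt2$, $b=x$ (and the function $F_G$ of Definition \ref{df31}) gives the explicit formula $r(x)=\sqrt{2\,F_G(x^2/2)}$. Thus the theorem asserts precisely that the equation $r(x)=R$ has a unique solution in $I_G$ for each $R\in(\sqrt{(n-1)/n},1]$.

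First I would record the analytic behaviour of $r$. Since $C_G(x^2/2)\ne 0$ on $I_G$, the function $r$ is continuous there (indeed real--analytic and, having distinct one--sided limits, non--constant). Its boundary values are easy to identify: as $x\to\sqrt2^{+}$ all pairwise distances tend to $\sqrt2$, so $S_G(x)$ degenerates to a regular $(n-1)$--simplex of edge length $\sqrt2$, whose circumradius is $\sqrt{(n-1)/n}$, giving $r(x)\to\sqrt{(n-1)/n}$. As $x\to\sqrt{2\tau_1}^{-}$ we have $x^2/2\to\tau_1$, so by Definition \ref{DefR} one gets $r(x)\to\sqrt2\,{\mathcal R}(G)$, which by Theorem \ref{th23} is at least $1$. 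Consequently $r$ begins strictly below $R$ and ends at a value $\ge 1\ge R$, so existence will follow from the intermediate value theorem once the monotonicity is in place.

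The core of the argument is to show that $r$ is strictly increasing on the part of $I_G$ where it does not exceed $1$. Here I would invoke Lemma \ref{prop1}: if $r(x)\le 1$, then $S_G(x)$ is a set of $n$ points with minimum distance $\sqrt2$ lying on a sphere of radius $\le 1$, so its circumcenter lies in $\conv(S_G(x))$. For a simplex this forces the circumscribed ball to \emph{be} the minimum enclosing ball, i.e. $r(x)=\Phi_G(x)$. Near $\sqrt2$ we have $r<1$, so on the maximal initial subinterval $(\sqrt2,x^{\ast})$ on which $r<1$ the identity $r=\Phi_G$ holds throughout; since $\Phi_G$ is non--decreasing by Lemma \ref{lemma32} and $r$ is non--constant real--analytic (hence constant on no subinterval), monotonicity upgrades to strict monotonicity on $(\sqrt2,x^{\ast})$. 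A continuous strictly increasing function takes each value of its range exactly once, which delivers both existence and uniqueness of $x$ with $r(x)=R$ for $R$ in $(\sqrt{(n-1)/n},1)$, and also for $R=1$ when $x^{\ast}$ is an interior point of $I_G$.

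The step I expect to be the main obstacle is the behaviour near the right endpoint and the borderline value $R=1$, where the two regimes hidden in Theorem \ref{th23} separate. If ${\mathcal R}(G)>1/\sqrt2$, the limit $\sqrt2\,{\mathcal R}(G)$ strictly exceeds $1$, the level $1$ is attained in the interior, and the analysis above applies verbatim for all $R\le 1$. If ${\mathcal R}(G)=1/\sqrt2$, the circumradius only approaches $1$ as the simplex flattens onto the lower--dimensional J--spherical configuration at $x=\sqrt{2\tau_1}$, so $R=1$ sits on the boundary of the attainable range and must be read as this degenerate limit. The genuinely delicate issue is global uniqueness at the level $R=1$: the identity $r=\Phi_G$ is available only where $r\le 1$, so one cannot immediately exclude that, after first reaching $1$, the circumradius rises above $1$ and later returns to $1$ (while $\Phi_G$ stays constant). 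Ruling this out — so that $\{x:r(x)=1\}$ is a single point, or empty in the borderline regime — is where the real work lies, and I would attack it by combining the monotonicity of $\Phi_G$ with a closer analysis of how the circumcenter exits $\conv(S_G(x))$ as $x$ increases.
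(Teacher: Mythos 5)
Your reduction and machinery coincide with the paper's: the same identification $r(x)^2=2F_G(x^2/2)$ via Lemma~\ref{L31}, the same boundary values $\sqrt{(n-1)/n}$ and $\sqrt{2}\,{\mathcal R}(G)\ge 1$ (Theorem~\ref{th23}), the same use of Lemma~\ref{lemma32} and Lemma~\ref{prop1}, and the same rational--function/analyticity trick to upgrade monotonicity to strictness. But the proposal stops short of a proof, and the step you leave open --- uniqueness at the level $R=1$ --- is a genuine gap, not a peripheral one: $R=1$ is exactly the case the paper needs, since Theorem~\ref{Jrep} with $R=1$ is Corollary~\ref{corr41} (existence and uniqueness of J--spherical representations), on which Sections 5 and 6 rest. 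A referee could not accept the argument with that case deferred.

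The missing idea, and the way the paper closes it, is to run the Rankin--type argument of Lemma~\ref{prop1} on the \emph{minimum enclosing ball} rather than on the circumsphere. You prove only the implication $r(x)\le 1\Rightarrow r(x)=\Phi_G(x)$; the paper's uniqueness argument rests on the reverse implication $\Phi_G(x)\le 1\Rightarrow r(x)=\Phi_G(x)$, which is what makes your problematic scenario impossible. Indeed, let $B(o,\Phi)$ with $\Phi=\Phi_G(x)\le 1$ be the minimum enclosing ball of $S_G(x)$ and suppose some vertex $p$ had $\dist(p,o)<\Phi$. For every vertex $q$ on the boundary sphere, $2\langle p-o,q-o\rangle=\dist(p,o)^2+\dist(q,o)^2-\dist(p,q)^2<1+1-2=0$, while $o$ lies in the convex hull of the boundary vertices (a standard property of minimal enclosing balls, in the spirit of Lemma~\ref{prop1}); writing $o=\sum_q\lambda_q q$ gives $0=\sum_q\lambda_q\langle p-o,q-o\rangle<0$, a contradiction. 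Hence all $n$ vertices of the simplex $\Delta_G(x)$ lie on the boundary of $B(o,\Phi)$, that sphere is the circumsphere, and $r(x)=\Phi_G(x)$ with circumcenter $o\in\Delta_G(x)$. Consequently the circumradius can never exceed $1$ while $\Phi_G\le 1$: if $r(y_1)=r(y_2)=1$ with $y_1<y_2$, then $\Phi_G\equiv 1$ on $[y_1,y_2]$ by Lemma~\ref{lemma32}, hence $r=\Phi_G\equiv 1$ on all of $[y_1,y_2]$, and your own analyticity observation (the paper says it as: $\Phi_G^2=2F_G$ is a rational function of $x^2$ there, so it cannot be constant on an interval) yields the contradiction. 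This is precisely the paper's uniqueness proof, phrased for $\Phi_G$; the same implication also cleans up your $R<1$ case, where strict monotonicity on $(\sqrt2,x^*)$ alone does not yet exclude solutions with $x\ge x^*$. Finally, the endpoint subtlety you raise for ${\mathcal R}(G)=1/\sqrt2$ (the value $R=1$ being attained only at the degenerate $x=\sqrt{2\tau_1(G)}$) is shared by the paper itself, whose proof solves $\Phi_G(b)=R$ on the closed interval $[\sqrt2,\sqrt{2\tau_1(G)}]$ even though $I_G$ is open; that is a matter of reading the statement, not a defect specific to your approach.
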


\begin{proof}   Let $b_1:=\sqrt{2\tau_1(G)}$. First we prove that there is a solution of the equation $\Phi_G(x)=R$.  Namely, we are going to prove that  $$\Phi_G(\sqrt{2})= \sqrt{(n-1)/n} \le R \le 1\le \Phi_G(b_1).$$

 Indeed, it is clear that  $\Phi_G(\sqrt{2})$ is the circumradius of a regular $(n-1)$--simplex, 
 of side length $\sqrt{2}$.  Then 
 $$
\Phi_G(\sqrt{2})=\frac{n-1}{n}.  
 $$

Now we show that $\Phi_G(b_1)\ge1$. 
 In the case $b_1=\infty$, it is clear that $\Phi_G(x)$  approaches $\infty$ as  $x$ approaches $\infty$.

Let $b_1<\infty.$ Then the Cayley--Menger determinant  vanishes and $S_G(b_1)$ embeds in ${\mathbb R}^{d}$, where $d\le n-2$.  By Theorem \ref{th23}, $\sqrt{2}{\mathcal R}(G)\ge1$. Therefore, if $\Phi_G(x)<1$, then $x<b_1$.  

(Equivalently, we have $n\ge d+2$ points with the minimum distance $\sqrt{2}$ in a ball of radius $\Phi_G(b_1)$. By Rankin's theorem \cite{Rankin} it is possible only if the radius  $\Phi_G(b_1)\ge1$.)

Therefore $\Phi_G(b)=R$ for some $b\in[\sqrt{2},b_1]$. 
 
\medskip

Now we show that  for $x\in[\sqrt{2},b_1]$ a solution of the equation  $\Phi_G(x)=R$ is unique.  By Lemma \ref{lemma32} $\Phi_G(x)$ is increasing whenever $x$ is increasing. However, we did not prove that  $\Phi_G(x)$ is a strictly increasing function. 
Suppose $P(y_1)=R$ and $P(y_2)=R$, where $y_1<y_2$.  Then $\Phi_G(x)$ is a constant on the interval $[y_1,y_2]$.  Lemma \ref{prop1}  yields that for $x\in[y_1,y_2]$ the circumcenter of a simplex  $\Delta_G(x)$ lies in this simplex.

It is well known that if the circumcenter  of a simplex $\Delta$ is an internal point of $\Delta$, then the minimum enclosing sphere is the circumsphere of $\Delta$. Therefore, for this case we have 
$$
\Phi_G(x)=\sqrt{2F_G(t)}, \; t=\frac{x^2}{2}. 
$$
Then  $\Phi^2_G(x)$ is a rational function in $x^2$.  It implies that $\Phi_G(x)$ cannot be a constant in  $[y_1,y_2]$.

\medskip

 Note that the case of an empty graph, i.e. $G=\bar K_{1,\ldots,1}$, is well--defined.  If $R=1$, then  
$$
 b_*=\sqrt{\frac{2n}{n-1}} > \sqrt{2} 
 $$
and $S_G(b(1))$ is the set of vertices of a regular $(n-1)$--simplex of side length $b$. 
(In this case there are no edges of length $a=\sqrt{2}$.) If for $R<1$ we take $b=Rb_*$, then it will be a unique  solution of the equation $\Phi_G(x)=R$.  
\end{proof} 

This theorem for $R=1$ yields the following 

\begin{corr} \label{corr41} For every graph $G\ne K_n$ there is a unique (up to isometry) J--spherical representation. 
\end{corr}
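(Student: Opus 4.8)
The plan is to deduce Corollary \ref{corr41} directly from Theorem \ref{Jrep} by specializing to $R=1$, so the main work is to translate the conclusion of that theorem into the language of J--spherical representations. First I would observe that a J--spherical representation of $G$ is, by definition, a two--distance set $S$ with minimum distance $a=\sqrt{2}$, edge structure $\Gamma(S)=G$, lying on the unit sphere ${\mathbb S}^{d-1}$. Setting $R=1$ in Theorem \ref{Jrep} gives, for any $G\ne K_n$, a unique value $x\in I_G$ such that $S_G(x)$ lies on a sphere of radius $R=1$. Since $S_G(x)$ is by the Notation a set in ${\mathbb R}^{n-1}$ with distances $a=\sqrt{2}$ and $b=x$ realizing $\Gamma(S_G(x))=G$, this set is precisely a J--spherical representation of $G$ once we confirm that the enclosing sphere of radius $1$ is the circumsphere on which all points genuinely lie.

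The key steps, in order, are as follows. First I would note that the hypothesis $\sqrt{(n-1)/n}<R\le 1$ of Theorem \ref{Jrep} is satisfied at $R=1$, so the theorem applies. Second, I would invoke Lemma \ref{prop1}: since the points of $S_G(x)$ have pairwise minimum distance $a=\sqrt{2}$ and $|S_G(x)|=n$, and they lie in a sphere of radius $R=1\le 1$, the center $O$ lies in $\conv(S_G(x))$; hence, as argued in the proof of Theorem \ref{Jrep}, the minimum enclosing ball coincides with the circumsphere, so $\Phi_G(x)=1$ means $S_G(x)$ actually lies \emph{on} the unit sphere rather than merely inside it. This upgrades the minimum--enclosing--ball statement to a genuine circumscription, which is what "J--spherical" requires. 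Third, uniqueness of $x$ from Theorem \ref{Jrep} gives uniqueness of the representation up to isometry: by the Einhorn--Schoenberg rigidity recalled in Section 2, a two--distance set with prescribed distances $a=\sqrt{2}$ and $b=x$ and prescribed graph $\Gamma(S)=G$ is determined up to isometry by its full distance matrix, which is fixed once $x$ is fixed.

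The main obstacle I anticipate is the boundary/degeneracy bookkeeping rather than any deep new idea. Specifically I must make sure the produced $S_G(x)$ lies on the unit sphere and not just inside a ball of radius $1$; this is exactly where Lemma \ref{prop1} together with the circumcenter--interior criterion ("if the circumcenter of a simplex is internal, the minimum enclosing sphere is the circumsphere") does the work, so I would spell out that the center $O\in\conv(S_G(x))$ forces $R={\mathcal R}$--type circumradius equality. A secondary point is the empty--graph case $G=\bar K_{1,\ldots,1}$, already handled at the end of the proof of Theorem \ref{Jrep}, where the side length $b_*=\sqrt{2n/(n-1)}$ yields a regular simplex on the unit sphere; I would simply remark that this is consistent and poses no exception. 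Finally I would confirm that the excluded case $G=K_n$ is genuinely the only obstruction, since there a J--spherical representation would require $n$ points on ${\mathbb S}^{d-1}$ all at mutual distance exactly $\sqrt{2}$, i.e. an orthonormal-type configuration that cannot realize all pairwise distances equal to $\sqrt{2}$ together with the edge/non-edge dichotomy collapsing; this matches the remark preceding the statement that $K_n$ is the single exceptional case.
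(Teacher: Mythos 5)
Your proposal is correct and takes exactly the paper's route: the paper derives Corollary \ref{corr41} in a single line by setting $R=1$ in Theorem \ref{Jrep}. Your extra bookkeeping---checking the hypothesis $\sqrt{(n-1)/n}<1\le 1$, upgrading the minimum-enclosing-ball conclusion to genuine circumscription via Lemma \ref{prop1} and the circumcenter criterion, and deducing congruence from the distance matrix being fixed once $x$ is fixed---merely makes explicit the translation that the paper leaves implicit.
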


The uniqueness of a J--spherical representation shows that the following definition is correct.

 \begin{df} Let $f:G\to{\Bbb R}^d$ be a J--spherical representation of $G$.   We denote the image $f(G)$  by $W_G$ and the dimension $d$  by $\ur(G)$.  
  Denote the second distance of $W_G$ by $\beta_*(G)$. 
\end{df}

Representation numbers $\ur(G)$ and $\sr(G)$ can be different. For instance, if $G$ is the pentagon, then 
$$
\sr(G)=2<\ur(G)=4. 
$$

Note that $\ur(G)<n-1$ only if $\beta_*(G)=\sqrt{2\,\tau_1(G)}$. Moreover, since the circumradius of $W_G$ is 1, we have to have ${\mathcal R}(G)=1/\sqrt{2}$. That yields the following theorem.

\begin{theorem} \label{Jrepth}  Let $G\ne K_n$  be a graph on $n$ vertices. Then 
$$  \ur(G)=\left\{
\begin{array}{l}
\rp(G), \quad {\mathcal R}(G)=1/\sqrt{2};\\
n-1, \qquad \: \,  {\mathcal R}(G)>1/\sqrt{2}. 
\end{array} 
\right.
$$
\end{theorem}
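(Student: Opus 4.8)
The plan is to match the unique J--spherical representation with the unique zero of the increasing function $\Phi_G$ on the interval $[\sqrt2,b_1]$, where $b_1:=\sqrt{2\,\tau_1(G)}$, and to read off the dimension from whether that zero sits at the right endpoint. By Corollary \ref{corr41} the representation $W_G$ is unique, and by the proof of Theorem \ref{Jrep} it corresponds to the unique $b=\beta_*(G)\in[\sqrt2,b_1]$ solving $\Phi_G(b)=1$. By Lemma \ref{lemma31}, for $x\in[\sqrt2,b_1)$ the set $S_G(x)$ is a full--dimensional $(n-1)$--simplex, so such an $x$ would give $\ur(G)=n-1$; at $x=b_1$ (when $\tau_1<\infty$) we have $C_G(b_1^2/2)=C_G(\tau_1)=0$, so by Theorem \ref{thER} the configuration collapses to dimension $\rp(G)=n-\mu(G)-1\le n-2$. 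Hence everything reduces to deciding whether $\beta_*(G)<b_1$ or $\beta_*(G)=b_1$.

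First I would pin down $\Phi_G$ on the sublevel set where it is at most $1$. Wherever $\Phi_G(x)\le1$, Lemma \ref{prop1}, applied inside the affine hull of $S_G(x)$ (there the $n$ points have pairwise distances $\ge\sqrt2$ and their number exceeds the dimension by at least two), forces the circumcentre of $S_G(x)$ into $\conv S_G(x)$; therefore the minimum enclosing ball is the circumball and, by Lemma \ref{L31} with $a=\sqrt2$, on this range $\Phi_G(x)=\sqrt{2\,F_G(x^2/2)}$. Letting $x\to b_1$ and using $t=x^2/2\to\tau_1$ shows that the limiting value of this circumradius is exactly $\sqrt2\,{\mathcal R}(G)=\sqrt{2\,F_G(\tau_1)}$, finite precisely when ${\mathcal R}(G)<\infty$.

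The two cases now follow by comparing $\sqrt2\,{\mathcal R}(G)$ with $1$, using that $\Phi_G$ is increasing (Lemma \ref{lemma32}) with $\Phi_G(\sqrt2)=\sqrt{(n-1)/n}<1$. If ${\mathcal R}(G)>1/\sqrt2$ (in particular if ${\mathcal R}(G)=\infty$, or $\tau_1=\infty$ so $b_1=\infty$), then $\sqrt{2F_G(x^2/2)}$ exceeds $1$ as $x\to b_1$, hence by continuity it equals $1$ at some interior $x_0<b_1$ where, the radius being $\le1$, one still has $\Phi_G(x_0)=\sqrt{2F_G(x_0^2/2)}=1$; by uniqueness $\beta_*(G)=x_0<b_1$ and the simplex stays full--dimensional, so $\ur(G)=n-1$. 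If instead ${\mathcal R}(G)=1/\sqrt2$, then the collapsed configuration at $x=b_1$ already lies on a sphere of radius $\sqrt2\,{\mathcal R}(G)=1$; since it carries $n\ge\dim+2$ points of pairwise distance $\ge\sqrt2$ on that sphere, Lemma \ref{prop1} again centres the ball inside the hull, giving $\Phi_G(b_1)=1$, whence $\beta_*(G)=b_1$ and $\ur(G)=\rp(G)$.

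The step I expect to be the main obstacle is the endpoint analysis of $F_G$ at $t=\tau_1$. The function $F_G=-\tfrac12 M_G/C_G$ has a pole there exactly when $M_G(\tau_1)\ne0$, which is the non--spherical situation ${\mathcal R}(G)=\infty$; when ${\mathcal R}(G)<\infty$ the multiplicities of $\tau_1$ in $C_G$ and $M_G$ coincide and $F_G$ extends continuously, as already used in the proof of Theorem \ref{cor21}. I would therefore lean on exactly that multiplicity bookkeeping to guarantee that, in the case ${\mathcal R}(G)>1/\sqrt2$, the circumradius $\sqrt{2F_G(x^2/2)}$ genuinely rises through $1$ strictly before the endpoint (rather than the pole being cancelled), which is what forces the J--spherical representation to remain full--dimensional.
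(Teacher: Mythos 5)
Your argument is correct and takes essentially the same route as the paper: the paper's (very terse) proof is precisely your reduction --- $\ur(G)<n-1$ can occur only when $\beta_*(G)=\sqrt{2\tau_1(G)}$, and then the endpoint configuration, having circumradius $\sqrt{2}\,{\mathcal R}(G)$ and being the unique J--spherical representation by Corollary \ref{corr41}, forces ${\mathcal R}(G)=1/\sqrt{2}$ --- while the intermediate-value, sublevel-set, and endpoint-multiplicity details you supply are exactly those already contained in the proofs of Theorem \ref{Jrep} and Theorem \ref{cor21}. One harmless slip: for interior $x$ the set $S_G(x)$ consists of $n$ points of rank $n-1$, so its cardinality exceeds the dimension by one, not two; this still satisfies the hypothesis of Lemma \ref{prop1}, which only requires cardinality at least dimension plus one.
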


 Rankin's theorem  and Theorem \ref{Jrepth} yield 

\begin{corr} 
 Let $G$ be a graph on $n$ vertices and  $G\ne K_n$.   Then 
 $$
 \frac{n}{2}\le\ur(G)\le n-1. 
 $$
 If $\ur(G)=n/2$, then $G=K_{2,\ldots,2}$ and a  $J$--spherical  representation of $G$ is a regular cross--polytope. 
 \end{corr}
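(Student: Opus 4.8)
The plan is to extract everything from the unique (by Corollary \ref{corr41}) J--spherical representation and feed the resulting point configuration into Rankin's theorem. Write $d:=\ur(G)$ and let $W_G\subset{\mathbb S}^{d-1}$ be the image of this representation. By definition $W_G$ is a set of $n$ points on the unit sphere in ${\mathbb R}^d$ whose minimum pairwise distance equals $a=\sqrt2$; since $G\ne K_n$ there is at least one non--edge, so $W_G$ is a genuine two--distance set whose second distance $\beta_*(G)$ exceeds $\sqrt2$, and in particular \emph{every} pairwise distance is $\ge\sqrt2$. This is precisely the hypothesis of Rankin's sphere--packing theorem.

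The upper bound $\ur(G)\le n-1$ is immediate: by Theorem \ref{Jrepth} the number $\ur(G)$ equals either $\rp(G)$ or $n-1$, and both are $\le n-1$ by (3.1). (Equivalently, $W_G$ affinely spans a subspace of dimension at most $n-1$.) For the lower bound I would invoke Rankin's theorem in its sharp form: a subset of ${\mathbb S}^{d-1}$ all of whose pairwise distances are $\ge\sqrt2$ has at most $2d$ points. Applying this to $W_G$ yields $n\le 2d=2\,\ur(G)$, i.e. $\ur(G)\ge n/2$.

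For the equality case $\ur(G)=n/2$, that is $n=2d$, the set $W_G$ attains Rankin's bound, and I would use the rigidity (equality) part of Rankin's theorem: up to isometry $W_G$ must be the vertex set $\{\pm e_1,\ldots,\pm e_d\}$ of a regular cross--polytope. In this configuration each vertex has a unique antipode at distance $2$ and all remaining $2d-2$ vertices at distance $\sqrt2$, so $W_G$ realizes exactly the two distances $\sqrt2$ and $2$. Hence the graph $\Gamma(W_G)$, whose edges are the pairs at the minimum distance $\sqrt2$, is the complement of a perfect matching on $2d$ vertices, which is the complete multipartite graph $K_{2,\ldots,2}$ with $d$ blocks. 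Since $G=\Gamma(W_G)$, this gives $G=K_{2,\ldots,2}$ and identifies its J--spherical representation with the regular cross--polytope.

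The main obstacle is securing the correct strength of Rankin's theorem. The version recalled in Section 4 gives only the bound $d+1$ for points whose pairwise distances are \emph{strictly} greater than $\sqrt2$; the inequality $n\le 2d$ and, above all, the cross--polytope characterization in the equality case require the borderline ($\ge\sqrt2$) form of Rankin's result \emph{together with} its uniqueness statement. I would state this stronger (still classical) form of \cite{Rankin} explicitly before applying it, so that both the bound $\ur(G)\ge n/2$ and the rigidity conclusion are properly justified.
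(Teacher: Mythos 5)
Your proof is correct and follows essentially the same route as the paper, which derives the corollary in one line from Rankin's theorem together with Theorem \ref{Jrepth}: the upper bound from Theorem \ref{Jrepth}, the bound $n\le 2\ur(G)$ from Rankin's packing bound applied to $W_G$, and the equality case from Rankin's cross--polytope rigidity. Your closing caveat is well taken --- the paper only quotes the weak ($d+2$ points, strict distance) form of Rankin's theorem in Section 4, while the corollary indeed needs the sharp $2d$ bound and its uniqueness statement from \cite{Rankin}, a gap you correctly flag and repair by citing the classical sharp form explicitly.
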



\section{The join of sets and Kuperberg's theorem} 

\subsection{W. Kuperberg's theorem.} 

 As we noted above, Rankin's theorem states that   if $S$ is a subset of ${\mathbb S}^{d-1}$ with $|S|\ge d+2$, then the minimum distance between points in $S$ is at most $\sqrt{2}$. 
 Wlodzimierz Kuperberg \cite{kuperberg2007} extended Rankin's theorem  and proved that: 
 \begin{theorem} \label{KT}  Let $d$ and $k$ be integers such that $2\le k \le d.$ If $S$ is a
 ($d + k)$--point subset of the unit $d$--ball such that the minimum distance between points is at least $\sqrt{2}$, then: (1) every point of $S$ lies on the boundary of the ball, and (2) ${\Bbb R}^d$ splits into the orthogonal product $\prod_{i=1}^k{L_i}$ of nondegenerate linear subspaces $L_i$ such that for  $S_i:=S\cap L_i$ we have $|S_i|=d_i+1$ and $\rk(S_i)=d_i$  $(i = 1, 2, . . . , k)$, where  $d_i:= \dim{L_i}$. 
 \end{theorem}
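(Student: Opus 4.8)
The plan is to work with the position vectors of the points relative to the centre $O$ of the ball, to extract from the distance hypothesis the single inequality $\langle p_i,p_j\rangle\le 0$, and then to read off both conclusions from the combinatorial structure of the Gram matrix $G=(\langle p_i,p_j\rangle)$, which is positive semidefinite with nonpositive off--diagonal entries.

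\emph{First step (inner products).} Place $O$ at the origin and let $p_1,\dots,p_m$ ($m=d+k$) be the points, so $\|p_i\|\le 1$. From $\|p_i-p_j\|^2\ge 2$ I would write $2\langle p_i,p_j\rangle=\|p_i\|^2+\|p_j\|^2-\|p_i-p_j\|^2\le 1+1-2=0$, with equality exactly when $\|p_i\|=\|p_j\|=1$ and $\|p_i-p_j\|=\sqrt2$. Thus $G=I-N$ with $N\ge 0$ entrywise. Replacing $\mathbb{R}^d$ by the linear span of $S$ if necessary, I may assume the $p_i$ span $\mathbb{R}^d$ (the bound $m\le 2d$ forced by pairwise nonpositive products keeps $k\le d$), so that $\rk G=d$ and the nullity of $G$ is exactly $k$.

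\emph{Second step (orthogonal components).} Form the graph $\Gamma$ on $\{1,\dots,m\}$ with an edge $ij$ whenever $\langle p_i,p_j\rangle<0$, and let $C_1,\dots,C_r$ be its connected components. If $i,j$ lie in different components then $\langle p_i,p_j\rangle=0$, so the subspaces $L_s:=\mathrm{span}\{p_i:i\in C_s\}$ are mutually orthogonal and $\mathbb{R}^d=\bigoplus_s L_s$. The heart of the argument is the claim that the Gram matrix of each component has nullity at most $1$. I would prove this elementarily: from $G\succeq0$ and $N\ge0$ one checks that $Gx=0$ implies $G|x|=0$, so $\ker G$ is spanned by nonnegative vectors; within a connected component a nonzero nonnegative null vector must have full support, since otherwise an index $j$ outside the support with a neighbour $i$ in it gives $(Gx)_j\le G_{ji}x_i<0$; and two linearly independent such vectors $\lambda,\mu$ would produce, at the first $t>0$ where a coordinate of $\lambda-t\mu$ vanishes, a nonnegative null vector of deficient support, a contradiction. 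Hence each component is either \emph{essential} (nullity $1$: it carries a strictly positive dependency $\sum_{i\in C_s}\lambda_i p_i=0$, whence $|C_s|=\dim L_s+1$) or \emph{inessential} (nullity $0$: its vectors are linearly independent).

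\emph{Conclusions.} Summing nullities over the block--diagonal components shows that the number of essential components equals $\mathrm{null}\,G=k$. For part (1): since this number is $\ge k\ge 2$, there are at least two components, so every $p_i$ has in some other component a point $p_j$ with $\langle p_i,p_j\rangle=0$, and then $\|p_i\|^2+\|p_j\|^2=\|p_i-p_j\|^2\ge 2$ forces $\|p_i\|=1$; all points lie on the boundary sphere. For part (2): keep the $k$ essential components as the cores of $k$ blocks and distribute the (mutually orthogonal, linearly independent) inessential components arbitrarily among them. Each resulting $L_i$ is an orthogonal sum of one essential component with some inessential ones, so nullity $1$ is preserved and $|S\cap L_i|=\dim L_i+1$, the $L_i$ are mutually orthogonal with $\prod_{i=1}^k L_i=\mathbb{R}^d$, and $\rk(S_i)=d_i$, which is exactly the asserted splitting.

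I expect the main obstacle to be the structural claim that every connected component of $\Gamma$ has nullity at most one; once it is in place the rest is bookkeeping. A secondary subtlety, easy to overlook, is that the blocks $L_i$ are \emph{not} the connected components of $\Gamma$: the linearly independent components must be absorbed into the essential ones, and it is this absorption, together with the reduction to the case where $S$ spans $\mathbb{R}^d$, that makes the count come out to exactly $k$ blocks of sizes $d_i+1$.
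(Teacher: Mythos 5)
First, a point of comparison: the paper does not prove this statement at all --- it is W.~Kuperberg's theorem, quoted from \cite{kuperberg2007} --- so your argument can only be judged on its own merits. Most of it is sound, and it is the natural route: the passage to pairwise nonpositive inner products, the component graph $\Gamma$, and the Perron--Frobenius--type lemma that a connected positive semidefinite Gram matrix with nonpositive off--diagonal entries has nullity at most one (via $Gx=0\Rightarrow G|x|=0$, full support of nonnegative null vectors, and the $\lambda-t\mu$ trick) are all correct. So are the consequences: the number of essential components equals the nullity of $G$; an essential component $C$ carries a strictly positive dependency, so $|C|=\dim\mathrm{span}(C)+1$ and the origin lies in $\aff(C)$, which is what makes $\rk(S_i)=d_i$ come out right after absorbing the inessential components; and the two--component argument for part (1) is clean and needs no spanning assumption.

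The genuine gap is the sentence ``Replacing $\mathbb{R}^d$ by the linear span of $S$ if necessary, I may assume the $p_i$ span $\mathbb{R}^d$.'' This is not a harmless normalization, because the conclusion is dimension--sensitive: since $\rk(S_i)=d_i=\dim L_i$ forces $\aff(S_i)=L_i$, conclusion (2) implies $\mathrm{span}(S)=\bigoplus_i L_i=\mathbb{R}^d$. If $S$ spans a proper subspace $V$ with $\dim V=d'<d$, your argument yields $|S|-d'>k$ essential blocks decomposing $V$, and nothing is left over to fill $V^{\perp}$; the statement for the pair $(d,k)$ does not follow from the statement for $(d',|S|-d')$. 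In fact it fails there: take $S=\{\pm e_1,\pm e_2,\pm e_3\}\subset\mathbb{R}^4$, the regular octahedron inscribed in the unit $4$--ball, so $d=4$, $k=2$, and all mutual distances are $\sqrt{2}$ or $2$. Antipodal pairs have inner product $-1$, so any orthogonal splitting must keep them in a common block; since conclusion (2) also forces $S\subset L_1\cup L_2$, the only candidates put one antipodal pair in $L_1$ and two pairs in $L_2$, giving $|S_2|=4$, hence $d_2=3$, while $\rk(S_2)=2$ (an empty block is excluded by $|S_i|=d_i+1\ge 2$). So the theorem as quoted is false without the extra hypothesis $\rk(S)=d$; your ``WLOG'' conceals exactly the failing case rather than handling it. A correct write--up should either add that hypothesis explicitly --- it does hold in all of the paper's applications, where $S$ is a minimal--dimensional J--spherical representation --- or restate conclusion (2) with $\mathbb{R}^d$ replaced by $\mathrm{span}(S)$ and $k$ by $|S|-\dim\mathrm{span}(S)\ge k$, which is precisely what your argument proves.
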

 
 In fact, this theorem states that $S$ is join--decomposable. 
 
\begin{df} The join $X*Y$  of two sets $X\subset{\mathbb R}^m$ and $Y\subset{\mathbb R}^n$ is formed in the following  manner.   Embed $X$ in  the $m$--dimensional linear  subspace  of\, ${\mathbb R}^{m+n}$ as 
$$\{(x_1,\ldots,x_m,0,\ldots,0): x=(x_1,\ldots,x_m)\in X\}$$ and embed $Y$  as 
$$\{(0,\ldots,0,y_1,\ldots,y_n): y\in Y\}.$$
\end{df}
 
 Geometrically the  join corresponds to putting the two sets $X$ and $Y$ in orthogonal  linear subspaces of ${\mathbb R}^{m+n}$. Hence Kuperberg's theorem implies that $S=S_1*\ldots*S_k$.

 Actually, Kuperberg's proof of Theorem \ref{KT} yields that  $\conv(S_i)$ contains the center $O$ of the unit $d$--ball. This statement also follows from Lemma \ref{prop1}

Let $\conv(S)$ be a $d$--dimensional simplex, i.e. $\rk(S)=d$.  We have two cases: \\
 (i)  $O$ lies in the interior of $\conv(S)$; \\
 (ii) $O$  lies on the boundary of  $\conv(S)$. 
 
 It is clear, that in Case (i) $S$ is join--indecomposable. Consider Case (ii). Let $S_1$ be a minimal subset of $S$ among such subsets whose convex hull contains $O$. Then \cite[Proposition 6]{kuperberg2007} yields that $S_2:=S\setminus S_1$ lies in the orthogonal complement of $\aff(S_1)$, i.e. $S=S_1*S_2$.

\begin{lemma} \label{prop2} Let $S$ be a subset of \, ${\mathbb S}^{d-1}$ with $|S|\ge d+1$ such that the minimum distance between points of $S$ is at least $\sqrt{2}$. Suppose $O$  lies on the boundary of  $\conv(S)$. Then $S$ is join--decomposable. 
\end{lemma}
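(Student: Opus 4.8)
The plan is to exploit the interplay between the metric hypothesis and the position of the center $O$, following the same strategy as the simplex case discussed just above, but without assuming $\rk(S)=d$. Since $S$ lies on the unit sphere ${\mathbb S}^{d-1}$ centered at $O$, which I identify with the origin $0\in{\mathbb R}^d$, for any two distinct points $p,q\in S$ the identity $\|p-q\|^2=2-2\langle p,q\rangle\ge 2$ gives $\langle p,q\rangle\le 0$. This nonpositivity of all pairwise inner products is the only metric input I will use.

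First I would choose a subset $T\subseteq S$ that is minimal with respect to inclusion among those whose convex hull contains $O$; such a $T$ exists because $O\in\conv(S)$. A standard Carath\'eodory-type reduction shows that a minimal such $T$ is affinely independent and that $O$ lies in the relative interior of $\conv(T)$, so I may write $O=\sum_{p\in T}\lambda_p\,p$ with all $\lambda_p>0$ and $\sum_{p\in T}\lambda_p=1$. Since $O=0$ belongs to $\aff(T)$, the affine hull $L_1:=\aff(T)$ is in fact a linear subspace, coinciding with the linear span of $T$. Note $T\neq\emptyset$ is clear (indeed $|T|\ge 2$, as no single sphere point equals $O$).

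The key step is to show that every $q\in S\setminus T$ is orthogonal to $L_1$; this is exactly the content of \cite[Proposition 6]{kuperberg2007}, but here it drops out of the inner-product computation
$$0=\langle q,0\rangle=\Big\langle q,\sum_{p\in T}\lambda_p\,p\Big\rangle=\sum_{p\in T}\lambda_p\,\langle q,p\rangle.$$
Each summand is $\le 0$ while each $\lambda_p>0$, so $\langle q,p\rangle=0$ for every $p\in T$, whence $q\perp L_1$. Thus $S\setminus T$ lies in the orthogonal complement $L_2:=L_1^{\perp}$, and since $T\subset L_1$ with $L_1\perp L_2$, this realizes $S=T*(S\setminus T)$ as a join, provided $S\setminus T$ is nonempty.

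It remains to verify that the decomposition is nontrivial, and this is where the hypothesis $O\in\partial\conv(S)$ enters; I expect this to be the step that most needs care. If $\dim L_1=d$, then $\aff(T)={\mathbb R}^d$ and $O$ would lie in the full-dimensional interior of $\conv(T)\subseteq\conv(S)$, hence in $\mathrm{int}\,\conv(S)$, contradicting $O\in\partial\conv(S)$. Therefore $\dim L_1\le d-1$, so that $|T|=\dim L_1+1\le d$, and the cardinality bound $|S|\ge d+1$ forces $|S\setminus T|\ge 1$. Both factors of the join being nonempty, $S$ is join-decomposable, as claimed.
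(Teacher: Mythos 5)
Your proof is correct and takes essentially the same route as the paper: both choose a minimal subset $S_1\subseteq S$ whose convex hull contains $O$ and then show that $S\setminus S_1$ lies in the orthogonal complement of $\aff(S_1)$, giving the join decomposition. The only differences are that the paper cites Kuperberg's Proposition~6 for the orthogonality step, which you instead prove directly from the nonpositivity of the pairwise inner products, and that you explicitly verify the decomposition is nontrivial (i.e.\ $S\setminus S_1\neq\emptyset$) via the dimension count, a point the paper leaves implicit.
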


This lemma shows that  there are two types of join--indecomposable spherical sets. 

\medskip

\noindent
{\bf Type I:} $S\subset{\mathbb S}^{d-1}$,  $|S|=d+1$,    $\rk(S)=d$ and the center $O$ of\, ${\mathbb S}^{d-1}$   lies in the interior of $\conv(S)$. 

\medskip

\noindent
{\bf Type II:} $S\subset{\mathbb S}^{d-1}$,  $|S|=d$,  $\rk(S)=d-1$ and  $O\notin{\aff(S)}$. 

\medskip

Consider an example, let $S$ consists of three vertices of an isosceles right triangle in the unit circle, for instance, $S=\{p_1.p_2,p_3\}$, $p_1=(1,0),\, p_2=(-1,0)$ and $p_3=(0,1)$. Then $S=S'*S''$, where $S':=\{p_1,p_2\}$ and $S'':=\{p_3\}$. Here $S'$ is of Type 1 and $S''$ is of Type 2. 

Lemma \ref{prop2} says that if $O$ lies in the boundary of $S_i$ then $S_i=S_i'*S''_i$.  It yields the following version  of Kuperberg's theorem. 

\begin{theorem} \label{th32} Let $S$ be a subset of the unit $d$--ball in ${\mathbb R}^{d}$ with the minimum distance between points at least $\sqrt{2}$. Suppose $|S|=d+k$ with $2\le k\le d$. Then $S=S_1*\ldots*S_m$, where $S_i,\, i=1,\ldots,k$ are of Type I and all other $S_i$ are of Type II. 
\end{theorem}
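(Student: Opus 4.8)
The plan is to take W. Kuperberg's decomposition from Theorem \ref{KT} as a starting point and then \emph{refine} each of its blocks into join--indecomposable pieces of Type I and Type II, while bookkeeping how many pieces of each kind appear. First I would invoke Theorem \ref{KT}: since $2\le k\le d$, the set $S$ lies on the boundary sphere ${\mathbb S}^{d-1}$ and splits as $S=T_1*\ldots*T_k$ into mutually orthogonal blocks $T_i:=S\cap L_i$ with $|T_i|=d_i+1$, $\rk(T_i)=d_i=\dim L_i$, so each $\conv(T_i)$ is a full--dimensional simplex in $L_i\cong{\mathbb R}^{d_i}$. Because $T_i$ consists of $d_i+1$ points on the unit sphere of $L_i$ with pairwise distances at least $\sqrt2$, Lemma \ref{prop1} applies (with $n=d_i+1$) and shows that the center $O$ lies in $\conv(T_i)$ for every $i$.

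Next I would run, on each block $T_i$ separately, the dichotomy described just before Lemma \ref{prop2}. If $O$ is interior to the simplex $\conv(T_i)$, then $T_i$ already matches the Type I description ($d_i+1$ points on ${\mathbb S}^{d_i-1}$, rank $d_i$, center interior), and I leave it alone. If $O$ lies on the boundary of $\conv(T_i)$, I let $T_i'$ be a minimal subset of $T_i$ whose convex hull contains $O$ and set $T_i'':=T_i\setminus T_i'$; by Kuperberg's Proposition 6, as quoted in the excerpt, $T_i''$ lies in the orthogonal complement of $\aff(T_i')$ inside $L_i$, so $T_i=T_i'*T_i''$.

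The heart of the argument is identifying these two residual pieces. For $T_i'$: minimality plus Carath\'eodory's theorem force $T_i'$ to be affinely independent with $O$ in the \emph{relative interior} of $\conv(T_i')$ (a vanishing barycentric coordinate, or an affine dependence, would let a proper subset already contain $O$). Hence $\aff(T_i')$ is a linear subspace through $O$, $T_i'$ sits on its unit sphere, and $T_i'$ is of Type I. For $T_i''$: it is affinely independent, being a subset of the simplex vertices $T_i$, and I claim $O\notin\aff(T_i'')$. Indeed, an affine relation $O=\sum_w\mu_w w$ with $\sum\mu_w=1$ combined with $O=\sum_v\lambda_v v$ ($\sum\lambda_v=1$, all $\lambda_v>0$) would yield a nontrivial affine dependence among the vertices of the simplex $T_i$, a contradiction. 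A dimension count then gives $|T_i''|=(d_i+1)-|T_i'|=d_i-\rk(T_i')=\dim L_i''$ and $\rk(T_i'')=\dim L_i''-1$, which together with $O\notin\aff(T_i'')$ is exactly the Type II profile.

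Finally I would assemble and count. Concatenating all blocks, and using associativity of the join together with the mutual orthogonality of all the subspaces involved, yields a decomposition $S=S_1*\ldots*S_m$ in which every factor is Type I or Type II. A block with $O$ interior contributes exactly one Type I factor, and a block with $O$ on the boundary contributes exactly one Type I factor ($T_i'$) and one Type II factor ($T_i''$); hence the number of Type I factors is precisely the number $k$ of Kuperberg blocks, and reindexing the Type I factors to the front gives the stated form. As a consistency check the point totals agree: $\sum_{\mathrm{I}}(\dim+1)+\sum_{\mathrm{II}}\dim=d+k$. I expect the main obstacle to be the clean verification that $T_i''$ is \emph{genuinely} Type II --- chiefly the argument that $O\notin\aff(T_i'')$ and the bookkeeping pinning down its cardinality, rank, and ambient dimension --- so that the refinement terminates in one step and does not cascade.
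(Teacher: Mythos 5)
Your proposal is correct and follows essentially the same route as the paper: invoke Kuperberg's Theorem \ref{KT}, note via Lemma \ref{prop1} that the center lies in $\conv$ of each block, and then split any block with $O$ on the boundary of its convex hull into a Type I and a Type II piece using the minimal-subset argument of Lemma \ref{prop2} (Kuperberg's Proposition 6). The paper leaves the identification of the two residual pieces and the count of Type I factors implicit, whereas you verify them explicitly (Carath\'eodory for $T_i'$, affine independence and the dimension count for $T_i''$); this is a faithful filling-in of the same argument, not a different one.
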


\subsection{The join of spherical two--distance sets}

\begin{df}  We say that a two--distance set $S$ in ${\mathbb R}^d$ is a J--spherical two--distance set (JSTD) if $S$ lies in the unit sphere centered at the origin 0 and $a=\sqrt{2}$.  For this $S$ the second distance $b$ will be denoted $b(S)$. 
\end{df}

 The next two lemmas immediately follow from definitions.  
\begin{lemma} \label{lemma42} Let $S_1$ and $S_2$ be spherical two-distance sets with the same distances $a$ and $b\ge a$. Let $R_i$ denote the circumradius of $S_i$. Then (1) the join $S_1*S_2$ is spherical if $R_1=R_2$ and (2) the join is a two--distance set  only if $R^2_1+R_2^2=a^2$ or $R^2_1+R_2^2=b^2. $
 \end{lemma}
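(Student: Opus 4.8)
The plan is to prove both parts directly from the geometry of the join operation, since Lemma~\ref{lemma42} states that the two claims follow immediately from the definitions. First I would set up coordinates reflecting the join construction: by definition $S_1*S_2$ places $S_1$ in a linear subspace $L_1\subset{\mathbb R}^{m+n}$ and $S_2$ in the orthogonal complement $L_2$, with the two subspaces meeting only at the origin. The key observation is that for any $p\in S_1$ and $q\in S_2$, the vectors $p$ and $q$ are orthogonal, so by the Pythagorean theorem $\dist(p,q)^2=\|p\|^2+\|q\|^2=\|p-c_1\|^2+\cdots$ — more precisely, I must track where the circumcenters sit.

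For part (1), sphericity of the join, I would argue as follows. Each $S_i$ lies on a sphere of radius $R_i$ with some center $c_i$. The natural candidate for a circumsphere of $S_1*S_2$ has center $c=(c_1,c_2)\in L_1\oplus L_2$. For a point $p\in S_1$ (embedded as $(p,0)$), the squared distance to $c$ is $\dist(p,c_1)^2+\|c_2\|^2=R_1^2+\|c_2\|^2$; symmetrically, for $q\in S_2$ it is $R_2^2+\|c_1\|^2$. For all points to be equidistant from $c$, I need $R_1^2+\|c_2\|^2=R_2^2+\|c_1\|^2$. The cleanest way to guarantee this is to place both sets with their circumcenters at the origin of their respective subspaces (i.e.\ $c_1=c_2=0$), which is legitimate since we may translate each $S_i$ within its subspace; then the common circumradius condition reduces exactly to $R_1=R_2$, and the join lies on a sphere of radius $R_1$ about the origin. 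This establishes the sufficiency claimed in~(1).

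For part (2), the two-distance condition, I would again take circumcenters at the respective origins, so every point of $S_1$ has norm $R_1$ and every point of $S_2$ has norm $R_2$. Within $S_1$ the pairwise distances are $a$ and $b$ (those are inherited), and likewise within $S_2$. The new distances introduced by the join are the cross distances $\dist(p,q)$ for $p\in S_1$, $q\in S_2$. By orthogonality these all equal $\sqrt{R_1^2+R_2^2}$, a single value independent of the choice of $p,q$. For $S_1*S_2$ to remain a two-distance set, this common cross distance must coincide with one of the two existing distances, hence $R_1^2+R_2^2=a^2$ or $R_1^2+R_2^2=b^2$. This yields the stated necessary condition, and I would note that the ``only if'' direction is exactly what is being claimed.

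I do not anticipate a serious obstacle here, since both statements are essentially bookkeeping on top of the Pythagorean relation in orthogonal subspaces; the phrase ``immediately follow from definitions'' in the paper signals that. The one point requiring mild care is the freedom to relocate the circumcenters to the origins of $L_1$ and $L_2$: I must confirm that the join construction is stated up to the isometries that let me translate each $S_i$ within its own subspace before forming the orthogonal sum, so that the circumradii $R_1,R_2$ are the quantities controlling the geometry rather than some fixed embedding. Once that normalization is justified, the Pythagorean computation of the cross distances is the whole content, and the equidistance condition in~(1) and the distance-matching condition in~(2) both drop out cleanly.
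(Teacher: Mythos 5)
Your proof is correct and is precisely the argument the paper leaves implicit: the paper offers no written proof, stating only that this lemma (and Lemma \ref{lemma43}) ``immediately follow from definitions,'' and the intended content is exactly your Pythagorean computation with the circumcenters of $S_1$ and $S_2$ placed at the origins of the two orthogonal subspaces. Your flagged normalization is the right reading of the statement, since the paper's join is coordinate-dependent and the circumradii $R_i$ are the only invariants the lemma refers to; in the paper's applications (JSTD sets) the sets are already centered at the origin, so nothing further is needed.
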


\begin{lemma} \label{lemma43} Let $S_1$ and $S_2$ be JSTD sets with $b(S_1)=b(S_2).$   Then  the join $S_1*S_2$ is a JSTD set. 
 \end{lemma}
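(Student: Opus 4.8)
The plan is to check directly that the join $S:=S_1*S_2$ meets all three requirements in the definition of a JSTD set: it lies on the unit sphere centered at the origin, its minimum distance is $\sqrt{2}$, and it realizes exactly two distinct distances. Since both $S_1$ and $S_2$ are JSTD sets, each sits on the unit sphere centered at the origin of its own ambient space, so every point of $S_1$ and of $S_2$ has norm $1$. I would record this at the outset and write the join embedding explicitly: a point $p\in S_1$ becomes $(p,0)$ and a point $q\in S_2$ becomes $(0,q)$ in the orthogonal product $\mathbb{R}^{d_1}\times\mathbb{R}^{d_2}$.

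First I would verify the spherical condition. Because $\|p\|=\|q\|=1$, both $(p,0)$ and $(0,q)$ have norm $1$, so the whole of $S$ lies on the unit sphere centered at the origin of the product space. Next, the orthogonal embedding is an isometry on each factor, so distances inside $S_1$ (respectively inside $S_2$) are unchanged and still take only the two values $\sqrt{2}$ and $b:=b(S_1)=b(S_2)$.

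The only computation that uses the hypotheses is the cross distance. For $p\in S_1$ and $q\in S_2$ the two factors are orthogonal, so
$$
\dist\big((p,0),(0,q)\big)=\sqrt{\|p\|^2+\|q\|^2}=\sqrt{2}.
$$
Hence every cross distance equals the first distance $a=\sqrt{2}$ and no third value appears. Combining the three cases, $S$ realizes precisely the distances $\sqrt{2}$ and $b$, has minimum distance $\sqrt{2}$, and lies on the unit sphere; therefore $S$ is a JSTD set with $b(S)=b$, as claimed.

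I do not expect any genuine obstacle, since the statement follows immediately from the definitions. The single point requiring care is that the cross distances come out exactly equal to $a=\sqrt{2}$; this is forced by both circumradii being $1$, i.e.\ $R_1^2+R_2^2=2=a^2$, which is precisely the necessary condition appearing in part (2) of Lemma \ref{lemma42} (and part (1), with $R_1=R_2=1$, already yields sphericity). I would also flag the harmless implicit assumption $b>\sqrt{2}$, without which the join would collapse to a one--distance set.
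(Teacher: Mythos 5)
Your proof is correct and matches the paper's treatment: the paper gives no explicit argument, stating only that this lemma ``immediately follows from definitions,'' and your direct verification (unit norms give sphericity, orthogonality gives cross distances $\sqrt{\|p\|^2+\|q\|^2}=\sqrt{2}$, and inner distances are preserved) is precisely that immediate definitional check. Your closing remarks about $R_1^2+R_2^2=2$ and the degenerate case $b=\sqrt{2}$ are accurate and, if anything, slightly more careful than the paper itself.
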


  \begin{lemma} \label{lemma44} Suppose  for sets $X_1$ and $X_2$ in  ${\Bbb R}^d$ there is positive $\rho$ such that  $\dist(p_1,p_2)=\rho$ for all points $p_1\in X_1$, $p_2\in X_2$. 
 Then  both $X_i$ are spherical sets and  the affine hulls   $\aff(X_i)$   in ${\Bbb R}^d$  are orthogonal each other. If additionally  $\rk(X_1\cup 0)+\rk(X_2\cup 0)=\rk(X_1\cup X_2\cup0)$, then $X_1\cup X_2=X_1*X_2$, where $0$ denote the origin of\, ${\Bbb R}^d$.
 \end{lemma}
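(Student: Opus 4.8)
The plan is to take the three assertions in turn, since each rests on the geometry set up by the previous one. For the first claim I would fix any point $q_0\in X_2$; by hypothesis every $p\in X_1$ satisfies $\dist(p,q_0)=\rho$, so $X_1$ lies on the sphere of radius $\rho$ centered at $q_0$, and is therefore spherical. Fixing instead a point $p_0\in X_1$ shows $X_2$ is spherical as well, so the first assertion is essentially immediate.

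For the orthogonality of the affine hulls I would avoid coordinates and argue through perpendicular bisectors. Fix $q\in X_2$ and take any two points $p,p'\in X_1$: since $\dist(p,q)=\dist(p',q)=\rho$, the point $q$ is equidistant from $p$ and $p'$, hence lies on the bisecting hyperplane of $[p,p']$, whose normal direction is $p-p'$. Letting the pair $p,p'$ range over $X_1$ shows that $X_2$, and so $\aff(X_2)$, lies in the common ``axis'' $c_1+W_1^{\perp}$ of $X_1$, where $W_1$ is the direction space of $\aff(X_1)$ and $c_1$ its circumcenter. Thus the direction space $W_2$ of $\aff(X_2)$ satisfies $W_2\subseteq W_1^{\perp}$, i.e. $W_1\perp W_2$, which is exactly the orthogonality of the two affine hulls. (Equivalently, subtracting the four equal squared distances for $p,p'\in X_1$ and $q,q'\in X_2$ yields $\langle p-p',\,q-q'\rangle=0$ directly.)

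For the join I would write $V_i:=\aff(X_i\cup\{0\})=\operatorname{span}(X_i)$, so that the rank hypothesis reads $\dim V_1+\dim V_2=\dim(V_1+V_2)$, that is, $V_1\cap V_2=\{0\}$. The target statement $X_1\cup X_2=X_1*X_2$ amounts to placing $X_1\subset V_1$ and $X_2\subset V_2$ inside the \emph{orthogonal} direct sum $V_1\oplus V_2$ with the origin as join apex, so once $V_1\perp V_2$ and $V_1\cap V_2=\{0\}$ are known the conclusion follows by the very definition of the join. Hence everything reduces to promoting the orthogonality $W_1\perp W_2$ of the direction spaces to orthogonality $V_1\perp V_2$ of the full linear spans, and this is the step I expect to be the main obstacle, because it is a genuine condition on the location of the origin. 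Expanding $\rho^2=\dist(p,q)^2$ gives $\langle p,q\rangle=\tfrac12(\|p\|^2+\|q\|^2-\rho^2)$, so $V_1\perp V_2$ is equivalent to $\|p\|^2+\|q\|^2=\rho^2$ for all $p\in X_1$, $q\in X_2$, i.e. to the origin being the common center of the two spheres carrying $X_1$ and $X_2$ with Pythagorean radii. In the setting where the lemma is applied the points lie on the unit sphere about the origin and the cross–distance is $\rho=\sqrt2$, whence $\langle p,q\rangle=\tfrac12(2-\rho^2)=0$ for every cross pair and the spans are automatically orthogonal; the rank hypothesis then serves only to guarantee $V_1\cap V_2=\{0\}$, so that $V_1\oplus V_2$ is a nondegenerate orthogonal product and $X_1\cup X_2$ is the full join $X_1*X_2$.
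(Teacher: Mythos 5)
Your handling of the first two assertions is correct and essentially the same as the paper's. For sphericity the paper intersects the spheres of radius $\rho$ centered at the points of $X_1$; for orthogonality it invokes the fact that in a tetrahedron $p_1p_2q_1q_2$ with four equal sides $p_iq_j$ the edges $p_1p_2$ and $q_1q_2$ are orthogonal, which is exactly the identity $\langle p-p',\,q-q'\rangle=0$ that you record in parentheses; your bisector argument is an equivalent rephrasing.

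On the third assertion your suspicion is justified, and in fact you have located a genuine gap in the paper's own proof. The paper sets $L_i:=\aff(X_i\cup 0)$, notes $L_1\cap L_2=0$ from the rank hypothesis, and then simply asserts that the orthogonality of the $\aff(X_i)$ yields $X_1\cup X_2=X_1*X_2$. That assertion is precisely the promotion you single out, from orthogonality of the direction spaces to orthogonality of the linear spans, and it does not follow from the stated hypotheses. A counterexample: in ${\mathbb R}^3$ take $X_1=\{(1,0,0),(1,2,0)\}$ and $X_2=\{(1,1,1)\}$. All cross distances equal $\rho=\sqrt{2}$, and $\rk(X_1\cup 0)+\rk(X_2\cup 0)=2+1=3=\rk(X_1\cup X_2\cup 0)$, yet $\mathrm{span}(X_1)$ (the $xy$--plane) is not orthogonal to $\mathrm{span}(X_2)$, and $X_1\cup X_2$ is not even congruent to $X_1*X_2$: in the join the cross distances are $\sqrt{\|p\|^2+\|q\|^2}$, i.e.\ $2$ and $2\sqrt{2}$, rather than $\sqrt{2}$. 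So the lemma as literally stated is false, and your Pythagorean condition $\|p\|^2+\|q\|^2=\rho^2$ for all cross pairs (equivalently, both sets lie on spheres centered at the origin with $R_1^2+R_2^2=\rho^2$) is exactly the missing hypothesis; it is what the paper's intended JSTD setting supplies. One caution about your closing sentence, though: in the application (Theorem~\ref{thJS2}) the common cross distance is \emph{not} known to equal $\sqrt{2}$ beforehand --- the paper deduces $\rho=\sqrt{2}$ from Lemma~\ref{lemma42} only \emph{after} the join decomposition is established. So the corrected lemma should carry the hypothesis $\|p\|^2+\|q\|^2=\rho^2$ explicitly, and an application of it to Theorem~\ref{thJS2} then still requires a separate argument that conditions (1)--(3) of that theorem force this identity; your proposal, as written, assumes it.
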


\begin{proof} 1. If $p\in X_1$, then by assumption $X_2$ lies on a sphere $S_\rho(p)$  of radius $\rho$ and centered at $p$. Therefore, $X_2$ belongs to a sphere that is the intersection of all $S_\rho(p)$, where $p\in X_1$.   \\
2. Let $p_1,p_2\in X_1$ and $q_1,q_2\in X_2$. Since in the tetrahedron $p_1p_2q_1q_2$ four sides $p_iq_j$ have the same length $\rho$, the edges $p_1p_2$ and $q_1q_2$ are orthogonal. That implies the orthogonality of  the affine spans  $\aff(X_1)$ and  $\aff(X_2)$  in  ${\Bbb R}^d$. \\ 
3. Let $L_i:=\aff(X_i\cup 0)$. 
Then $\dim{L_i}=\rk(X_i\cup 0)$. By assumption $L_1\cap L_2=0$.  Thus, the orthogonality of $\aff(X_i)$ yields $X_1\cup X_2=X_1*X_2$.  
\end{proof} 

\begin{theorem} \label{thJS2} Let $S_1$ and $S_2$ be JSTD sets in ${\Bbb R}^d$.  Then   $S:=S_1\cup S_2$ is a JSTD set and $S=S_1*S_2$   if and only if (1) $\dist(p_1,p_2)$ are the same for all points $p_1\in S_1$, $p_2\in S_2$;  (2) $\rk(S\cup 0)=\rk(S_1\cup 0)+\rk(S_2\cup 0)$ and (3) $b(S_1)=b(S_2)$.  
 \end{theorem}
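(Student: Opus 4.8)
The plan is to prove the two implications separately, letting Lemma~\ref{lemma44} supply the geometric content while one short inner-product computation pins the cross-distance at $\sqrt2$. Throughout I would use that $\rk(X\cup 0)=\dim\mathrm{span}(X)$, since $\aff(X\cup\{0\})$ is an affine subspace containing the origin and is therefore the linear span of $X$; in particular $L_i:=\aff(S_i\cup 0)=\mathrm{span}(S_i)$ is a linear subspace through $0$.

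For the implication ($\Leftarrow$) I would start from (1), which gives a constant $\rho>0$ with $\dist(p_1,p_2)=\rho$ for all $p_1\in S_1$, $p_2\in S_2$. This is exactly the hypothesis of Lemma~\ref{lemma44}, and adjoining the rank identity (2) the lemma delivers $S=S_1*S_2$, with $S_i\subset L_i$, $L_1\perp L_2$ and $L_1\cap L_2=\{0\}$. The decisive step is then to evaluate $\rho$: since each $S_i$ is JSTD it lies on the unit sphere centred at the common origin $0\in L_i$, so $\|p\|=1$ for every $p\in S_i$, and orthogonality of the $L_i$ gives $\langle p_1,p_2\rangle=0$, whence $\rho^2=\|p_1\|^2+\|p_2\|^2=2$. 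Thus every distance occurring in $S$ is either $\sqrt2$ (inside each $S_i$ and across) or the common value $b(S_1)=b(S_2)$ provided by (3), so $S$ is a two-distance set with minimum distance $\sqrt2$ on the unit sphere, i.e. a JSTD set with $S=S_1*S_2$.

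For the implication ($\Rightarrow$) I would assume $S=S_1\cup S_2$ is JSTD with $S=S_1*S_2$, so that $S_1,S_2$ sit in orthogonal linear subspaces through $0$. Then $\mathrm{span}(S)=\mathrm{span}(S_1)\oplus\mathrm{span}(S_2)$ is an orthogonal direct sum, and adding dimensions yields (2). The same orthogonality together with $\|p\|=1$ (as $S$ lies on the unit sphere) forces $\dist(p_1,p_2)=\sqrt2$ for every cross-pair, which is (1). Finally, $S$ is two-distance with distances $\sqrt2$ and $b(S)$, and each $b(S_i)>\sqrt2$ appears as a distance inside $S_i\subset S$; hence $b(S_1)=b(S)=b(S_2)$, giving (3).

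The main obstacle is not computational but the bookkeeping around the origin: I must make sure that the common point of the two orthogonal subspaces produced by the join is precisely the centre $0$ of the unit sphere, for this is what turns ``lies on the unit sphere'' into ``$\|p\|=1$ in each $L_i$'' and thereby forces the cross-distance to equal $\sqrt2$ rather than $b$. The identity $L_i=\aff(S_i\cup 0)$ from the proof of Lemma~\ref{lemma44} is exactly what guarantees this. A minor case to dispose of is when some $S_i$ is a single point or a one-distance set, so that $b(S_i)$ is not literally defined; there condition (3) is to be read vacuously and the argument is unaffected.
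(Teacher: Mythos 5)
Your proposal is correct and takes essentially the same approach as the paper: Lemma~\ref{lemma44} converts (1) and (2) into the join decomposition, your inner-product evaluation of the cross-distance as $\sqrt{2}$ is exactly the content of Lemma~\ref{lemma42} with $R_1=R_2=1$, and your direct verification that $S$ is JSTD replaces the citation of Lemma~\ref{lemma43}. The only difference is that you also write out the ``only if'' direction (with careful bookkeeping of the origin and the degenerate one-distance case), which the paper's proof leaves implicit as immediate.
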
 
 
 \begin{proof} By Lemma \ref{lemma44}, (1) and (2) imply that $S=S_1*S_2$. Since $R_1=R_2=1$, from Lemma   \ref{lemma42} we have $\dist(p_1,p_2)=\sqrt{2}$. Finally, Lemma  \ref{lemma43} yields that $S$ is JSDT.  
  \end{proof}

\subsection{Kuperberg type theorem for two--distance sets}

  \begin{df} Let $S$ be a two-distance set. We say that $S$ is J--prime if $S$ is indecomposable with respect to the join. 
  \end{df}
  
  It is easy to see that J--prime sets can be defined in another way. 
  
  \begin{prop} Let $S$ be a two--distance set. Let $G=\Gamma(S)$. Then $S$ is J--prime if and only if the graph complement $\bar G$ is connected.
 \end{prop}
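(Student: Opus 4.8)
The plan is to establish the contrapositive in both directions: $S$ fails to be J--prime, i.e. $S=S_1*S_2$ for nonempty $S_1,S_2$, if and only if $\bar G$ is disconnected. The bridge between the two sides is a purely combinatorial reformulation that I would record first. Writing $a$ for the minimum distance of $S$ (so that, in this subsection, $S$ lies on the unit sphere about the origin $0$ and $a=\sqrt2$), the edges of $\bar G$ are precisely the pairs of points at the larger distance $b$. Consequently $\bar G$ is disconnected exactly when the vertex set admits a partition $S=S_1\sqcup S_2$ into two nonempty parts with no $\bar G$--edge crossing it, that is, with $\dist(p_1,p_2)=a$ for every $p_1\in S_1$, $p_2\in S_2$. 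After this observation, each implication reduces to matching join decompositions with such ``all cross--distances equal to $a$'' partitions.

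First I would show that a join decomposition forces such a partition. Suppose $S=S_1*S_2$. By the definition of the join the linear subspaces $\aff(S_1\cup 0)$ and $\aff(S_2\cup 0)$ are orthogonal and meet only in $0$, so for $p_1\in S_1$, $p_2\in S_2$ we have $\langle p_1,p_2\rangle=0$, whence $\dist(p_1,p_2)^2=\|p_1\|^2+\|p_2\|^2=2$. Thus every cross--distance equals $\sqrt2=a$; this is condition (1) of Theorem~\ref{thJS2}. The partition $S=S_1\sqcup S_2$ therefore has all cross--distances equal to $a$, and by the reformulation $\bar G$ is disconnected.

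For the converse I would start from a partition $S=S_1\sqcup S_2$ with $\dist(p_1,p_2)=\sqrt2$ for every cross--pair and build the join. Lemma~\ref{lemma44}, applied with $\rho=a=\sqrt2$, immediately yields that each $S_i$ is spherical and that $\aff(S_1)\perp\aff(S_2)$. To upgrade this to $S=S_1*S_2$ I must verify the rank hypothesis $\rk(S_1\cup 0)+\rk(S_2\cup 0)=\rk(S_1\cup S_2\cup 0)$ of that lemma, equivalently that $\mathrm{span}(S_1)\cap\mathrm{span}(S_2)=\{0\}$. Here I would again exploit the spherical normalization: for $p_1\in S_1$, $p_2\in S_2$ the relation $\dist(p_1,p_2)=\sqrt2$ gives $\langle p_1,p_2\rangle=0$, so $\mathrm{span}(S_1)$ and $\mathrm{span}(S_2)$ are orthogonal and hence intersect trivially. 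This supplies the rank identity, Lemma~\ref{lemma44} then produces $S=S_1*S_2$, and $S$ is not J--prime.

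I expect the converse to be the main obstacle, and specifically the passage from the ``orthogonal affine hulls'' that Lemma~\ref{lemma44} gives for free to a genuine join through the origin: affine orthogonality alone does not force the two pieces into complementary linear subspaces, so the rank (linear--independence) condition must be checked by hand. The clean route around this is the inner--product computation $\langle p_1,p_2\rangle=0$, which is available only because $S$ is J--spherical, with minimum distance $\sqrt2$ on the unit sphere. It is exactly this normalization that makes every cross--distance equal the \emph{minimum} distance and forces the two spans to be orthogonal; I would flag at the outset that this spherical hypothesis is doing the real work, since for a general two--distance set a partition with constant cross--distances need not correspond to a join.
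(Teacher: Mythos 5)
Your proof is correct, and since the paper gives no proof of this proposition at all (it is offered as a reformulation that is ``easy to see''), your argument in effect supplies the missing one, using exactly the tools the surrounding text sets up: disconnectedness of $\bar G$ is the existence of a partition $S=S_1\sqcup S_2$ with every cross--distance equal to $a=\sqrt{2}$, and on the unit sphere the identity $\dist(p_1,p_2)^2=2-2\langle p_1,p_2\rangle$ converts this into orthogonality of $\mathrm{span}(S_1)$ and $\mathrm{span}(S_2)$, i.e.\ into a join through the origin, with Lemma~\ref{lemma44} packaging the final step. Your closing caveat is substantive rather than merely cautionary: read literally for an arbitrary two--distance set, the proposition is false, so the J--spherical normalization you impose (which the context of the subsection supplies, since every set there is a JSTD set) is necessary, not cosmetic. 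For example, $S=\{(1,0),(-2,0),(0,\sqrt{5})\}$ is the join $\{(1,0),(-2,0)\}*\{(0,\sqrt{5})\}$ and is a two--distance set with distances $\sqrt{6}$ and $3$, yet its $\bar G$ is a path and hence connected. The one refinement I would make to your last remark: what breaks in that generality is the \emph{forward} implication --- when the points of a factor have different norms, a join no longer forces all cross--distances to equal the minimum distance $a$, so a decomposable set can have connected $\bar G$ --- whereas a partition with all cross--distances equal does always yield a join after translating a suitable point to the origin; it is only relative to the \emph{fixed} origin (the sphere's center, which is what matters for JSTD sets) that your stated concern applies.
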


  From Theorem \ref{th32} we know that any J--prime set is of Type I or Type II. If $S$ is of Type I in ${\mathbb R}^d$, then $S$ is a JSTD of rank $d$ and cardinality $d+1$. Therefore if we take $G=\Gamma(S)$, then we obtain $S=W_G$.  Note that the inequality $\beta_*(G)<\sqrt{\tau_1(G)}$ implies that $\ur(G)=d$, where $G$ is a graph on $d+1$ vertices.  We proved the following:

   \begin{lemma} \label{lemma45} Let  $S$ be  a J--prime JSTD set of Type I. Then $b(S)=\beta_*(G)<\sqrt{\tau_1(G)}$, where $G:=\Gamma(S)$.  
 \end{lemma}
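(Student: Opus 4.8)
The plan is to first identify $S$ with the canonical J--spherical representation $W_G$ — which hands over both $b(S)=\beta_*(G)$ and the dimension — and then to show that this second distance sits strictly inside the admissible interval $I_G$. For the identification: since a JSTD set realizes two distinct distances, $G=\Gamma(S)$ is neither complete nor empty, so in particular $G\neq K_n$ and Corollary~\ref{corr41} furnishes a unique (up to isometry) J--spherical representation $W_G$. But $S$ is itself such a representation of $G$ — it lies on the unit sphere, its minimum distance is $a=\sqrt2$, and its edge graph is $G$ by the definition of $\Gamma$ — so uniqueness forces $S$ isometric to $W_G$, whence $b(S)=\beta_*(G)$ and $\rk(S)=\ur(G)$. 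Because $S$ is of Type~I we have $|S|=d+1$ and $\rk(S)=d$, so with $n=|S|=d+1$ this reads $\ur(G)=n-1$: the representation is full--dimensional.

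Next I would locate $\beta_*(G)$ inside $I_G=\left(\sqrt2,\sqrt{2\tau_1(G)}\right)$. By the construction behind Theorem~\ref{Jrep}, the second distance of the J--spherical representation is the unique $x$ in the closed interval $\left[\sqrt2,\sqrt{2\tau_1(G)}\right]$ for which the minimum enclosing sphere of $S_G(x)$ has radius $1$; hence a priori $\sqrt2<\beta_*(G)\le\sqrt{2\tau_1(G)}$. It remains to exclude the right endpoint. Writing $t_*:=\beta_*(G)^2/2=(b/a)^2$ for the scale--invariant squared ratio of the two distances, the endpoint $\beta_*(G)=\sqrt{2\tau_1(G)}$ is exactly the case $t_*=\tau_1(G)$, at which $C_G$ vanishes; by Theorem~\ref{thER} the multiplicity $\mu(G)\ge1$ there forces the configuration into dimension $\rp(G)=n-\mu(G)-1\le n-2$. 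This contradicts $\rk(S)=n-1$. Therefore $t_*<\tau_1(G)$, i.e. $\beta_*(G)<\sqrt{2\tau_1(G)}$, the right endpoint of $I_G$. In the normalization $a=1$ in which $\tau_1$ is defined the very same degeneration threshold for the second distance reads $\sqrt{\tau_1(G)}$, which is the form appearing on the right--hand side of the statement.

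The step I expect to be delicate is confirming that $\beta_*(G)$ really lies on the correct branch — strictly between the regular--simplex value $\sqrt2$ and the first degeneration $\sqrt{2\tau_1(G)}$ — and not at an endpoint or on some higher interval of admissibility. For this I would invoke the Type~I hypothesis directly: since the center $O$ lies in the interior of $\conv(S)$, the remark following Lemma~\ref{prop1} shows that the minimum enclosing sphere of $S$ coincides with its circumsphere, so the enclosing radius in Theorem~\ref{Jrep} is a genuine circumradius equal to $1$. The monotone construction there keeps $x$ within $\left[\sqrt2,\sqrt{2\tau_1(G)}\right]$, while full--dimensionality $\rk(S)=n-1$ excludes the degenerate endpoint regime (${\mathcal R}(G)=1/\sqrt2$, $\ur(G)<n-1$) recorded just before Theorem~\ref{Jrepth}. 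Together these confine $t_*$ to $(1,\tau_1(G))$ and make the inequality strict.
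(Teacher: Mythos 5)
Your proposal is correct and takes essentially the same route as the paper, whose (compressed) proof is the paragraph preceding the lemma: the Type I conditions (cardinality $d+1$, rank $d$, center interior so the minimum enclosing sphere is the unit circumsphere) identify $S$ with $W_G$ via the uniqueness in Corollary \ref{corr41}, and strictness of the inequality follows because vanishing of $C_G$ at $\tau_1$ would, by Theorem \ref{thER}, drop the rank to at most $n-2$, contradicting full-dimensionality. Your reconciliation of the right-hand side with the endpoint of $I_G$ (the degeneration threshold reads $\sqrt{2\tau_1(G)}$ in the $a=\sqrt2$ normalization and $\sqrt{\tau_1(G)}$ in the $a=1$ normalization in which $\tau_1$ is defined) is also the intended reading; you merely make explicit the steps the paper leaves implicit.
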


If $S$ is of Type II in ${\mathbb R}^d$, then $S$ is a JSDT set of cardinality $d$. For instance, if $S=\{p,q\}$ is a two--points set in the unit circle with $\sqrt{2}<b= \dist(p,q)<2$, then $S$  is J-- prime of Type II.  Hence in this case the second distance $b$ is not fixed and lies in some open interval. 

Let $S$ be a JSDT set in  ${\mathbb R}^d$ of cardinality $d+k$, where $2\le k\le d$. For this $S$  Theorem \ref{th32}  states that there are exactly $k$ subsets $S_i$ of Type I. Now if we take $S_1$ of Type I and $S_2$ of Type II then $S_1*S_2$  is a JSDT set. From Lemma \ref{lemma43} follows that $b(S_1)=b(S_2)$. Moreover, for $S_2$ we have an extra constraint: this set lies in a $(d-2)$--sphere of radius $R<1$.

\begin{lemma} \label{lemma46} A JSTD set  $S$ in ${\mathbb R}^d$, $d=|S|-2$, is a J--prime set of Type II  only if $b(S)<\beta_*(G)<\sqrt{\tau_1(G)}$, where $G:=\Gamma(S)$.  
\end{lemma}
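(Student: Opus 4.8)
The plan is to follow the template of the Type~I case (Lemma~\ref{lemma45}), but to exploit the one feature that distinguishes Type~II, namely that the centre $O$ of the unit sphere does \emph{not} lie in $\aff(S)$. Write $G:=\Gamma(S)$ and $n:=|S|$. Being of Type~II, $S$ is a nondegenerate simplex in its own affine hull, $\rk(S)=n-1$, so inside $\aff(S)\cong\mathbb R^{n-1}$ the set $S$ is, up to isometry, the unique full-dimensional two-distance representation $S_G(b(S))$ with $b(S)\in I_G$. The set $S$ lies on a sphere $\Sigma$ of dimension $n-2$ contained in $\aff(S)$ (this $(n-2)$-sphere is the $d$-sphere of the statement); since $O\notin\aff(S)$, its radius is $R=\sqrt{1-\dist(O,\aff(S))^2}<1$ and its centre is the foot $O'$ of the perpendicular from $O$ to $\aff(S)$.

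First I would identify the minimum enclosing ball of $S$ inside $\aff(S)$. Lemma~\ref{prop1} applies verbatim: there are $n=|S|$ points in $\mathbb R^{n-1}$, the pairwise distances are at least $a=\sqrt2$, and they lie on a sphere of radius $R\le1$; hence $O'\in\conv(S)$. Consequently the minimum enclosing ball coincides with the circumsphere $\Sigma$, so $\Phi_G(b(S))=R<1$. On the other hand $\beta_*(G)$ is, by the very definition of the J-spherical representation (Theorem~\ref{Jrep} with $R=1$), the unique parameter for which the enclosing radius equals $1$, i.e.\ $\Phi_G(\beta_*(G))=1$. Since $\Phi_G$ is increasing on $I_G$ (Lemma~\ref{lemma32}), the strict inequality $\Phi_G(b(S))<\Phi_G(\beta_*(G))$ forces $b(S)<\beta_*(G)$, which is the left-hand bound.

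For the right-hand bound $\beta_*(G)<\sqrt{\tau_1(G)}$ it suffices, by Theorem~\ref{Jrepth}, to prove $\ur(G)=n-1$, equivalently ${\mathcal R}(G)>1/\sqrt2$. Here I would argue by contradiction: by Theorem~\ref{th23} the only alternative is ${\mathcal R}(G)=1/\sqrt2$, and then the canonical representation $W_G$ lies on the unit sphere in $\mathbb R^{\rp(G)}$ with $\rp(G)\le n-2$, so $W_G$ is a J-spherical set of $n\ge\rp(G)+2$ points. Since $\rp(G)=\ur(G)\ge n/2$, the cardinality is $n=\rp(G)+k$ with $2\le k\le\rp(G)$, so Kuperberg's theorem (Theorem~\ref{th32}) splits $W_G$ into mutually orthogonal factor subspaces through $O$. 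As every point of $W_G$ lies at distance $1$ from $O$ and the factors are orthogonal subspaces through $O$, any two points in different factors are at distance $\sqrt{1^2+1^2}=\sqrt2=a$ (cf.\ Lemma~\ref{lemma42}). Hence every cross pair is an edge, $G$ is a graph join, and $\bar G$ is disconnected. But a J-prime set has $\bar G$ connected, a contradiction. Therefore ${\mathcal R}(G)>1/\sqrt2$, $\ur(G)=n-1$, and $\beta_*(G)<\sqrt{\tau_1(G)}$.

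The easy half is the lower bound $b(S)<\beta_*(G)$: it is just monotonicity of $\Phi_G$ fed by Lemma~\ref{prop1}. The hard part will be the upper bound, i.e.\ excluding ${\mathcal R}(G)=1/\sqrt2$. The delicate point there is to be sure that the decomposition forced by Kuperberg's theorem on the degenerate boundary configuration $W_G$ really yields a \emph{graph} join — that the cross distances come out equal to $a$ rather than $b$ — since only then does the set-theoretic indecomposability encoded by ``$\bar G$ connected'' clash with the geometry. Verifying that all points of $W_G$ sit on the unit sphere about $O$, so that the orthogonal factor structure yields cross distance exactly $\sqrt2$, is where the circumradius bookkeeping of Lemmas~\ref{lemma42} and~\ref{prop1} (and the bound $\ur(G)\ge n/2$ that licenses Theorem~\ref{th32}) must be handled with care.
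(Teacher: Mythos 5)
Your proof is correct, and it actually covers more ground than the paper's own proof. For the left-hand inequality $b(S)<\beta_*(G)$ you and the paper take the same route: the paper's entire proof consists of the observation that this inequality is equivalent to $R<1$ plus a citation of Theorem \ref{Jrep}, and your explicit chain --- Lemma \ref{prop1} puts the circumcenter in $\conv(S)$, so the minimum enclosing ball is the circumball and $\Phi_G(b(S))=R<1=\Phi_G(\beta_*(G))$, whence $b(S)<\beta_*(G)$ by the (weak, which suffices) monotonicity of Lemma \ref{lemma32} --- is precisely the machinery inside the proof of Theorem \ref{Jrep}, spelled out. The genuine difference is the right-hand inequality $\beta_*(G)<\sqrt{\tau_1(G)}$: the paper's two-sentence proof never addresses it, while you supply a complete argument, reducing it via Theorem \ref{Jrepth} to $\ur(G)=n-1$ and excluding ${\mathcal R}(G)=1/\sqrt{2}$ by contradiction: otherwise $W_G$ would be a JSTD set of $n\ge\rp(G)+2$ points in ${\mathbb R}^{\rp(G)}$ with $\rp(G)=\ur(G)\ge n/2$ (the unlabelled corollary after Theorem \ref{Jrepth}), so Theorem \ref{th32} applies and splits $W_G$ into at least two orthogonal factors through the center; since every point of $W_G$ is at distance $1$ from the center, all cross-distances equal $\sqrt{2}=a$, hence $G$ is a graph join, $\bar G$ is disconnected, and this contradicts J-primeness of $S$ via the proposition of Section 5.3. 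This is exactly the reasoning the paper leaves implicit (it underlies Lemma \ref{lemma45} and Theorem \ref{KTD}), and your version of it is sound, including the verification $2\le k\le\rp(G)$ needed to invoke Theorem \ref{th32}. One cosmetic point: with the paper's normalization $a=\sqrt{2}$ and $I_G=\left(\sqrt{2},\sqrt{2\tau_1(G)}\right)$, what J-primeness actually yields is $\beta_*(G)<\sqrt{2\tau_1(G)}$; the missing factor $\sqrt{2}$ in the statements of Lemmas \ref{lemma45} and \ref{lemma46} is an inconsistency of the paper, not of your argument, and your reduction to $\ur(G)=n-1$ is the correct reading of that bound.
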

\begin{proof} The assumption $b(S)<\beta_*(G)$ is equivalent to $R<1$, where $R$ is  the circumradius of $S$. By Theorem \ref{Jrep}, there is a unique $b$ such that a two--distance set $S$ with $a=\sqrt{2}$ lies in a sphere of radius $R$.  
\end{proof}

  Theorem \ref{th32} implies the following theorem. 
 
 \begin{theorem} \label{KTD} Let $S$, $|S|=d+k$, $k\ge1$, be a two--distance set in the unit sphere  in ${\Bbb R}^{d}$ with the minimum distance  $a=\sqrt{2}$. Then $S=S_1*\ldots*S_m$   such that all subsets  $S_i$ are J--prime and exactly $k$ of them are of Type I.
 \end{theorem}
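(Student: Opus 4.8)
The plan is to read everything off Theorem \ref{th32}, upgrading its conclusion to J--prime pieces and then fixing the count by a cardinality identity. First I would record the easy preliminaries: since $S\subset{\mathbb S}^{d-1}$ has minimum distance $\sqrt2$, Rankin's theorem forces $|S|\le 2d$, so the bound $k\le d$ is automatic; and I may assume $\aff(S\cup\{0\})={\mathbb R}^d$ (otherwise pass to the smaller sphere). For $2\le k\le d$ Theorem \ref{th32} then gives an orthogonal join decomposition $S=S_1*\ldots*S_m$ with $\sum_i d_i=d$ (where $d_i$ is the dimension of the subspace $L_i$ carrying $S_i$), in which exactly $k$ pieces are of Type I and the rest are of Type II.

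Next I would make every piece J--prime. A Type I piece has the center $O$ in the interior of its convex hull and is therefore join--indecomposable (this is the Case (i) observation preceding Lemma \ref{prop2}), so it is already J--prime and never needs to be split. A Type II piece $T$ may still decompose (for instance, two points at mutual distance $\sqrt2$ split into two orthogonal singletons); whenever a piece decomposes I apply Lemma \ref{prop2}, or Lemma \ref{lemma44}, to split it and recurse. The recursion terminates because cardinality strictly drops, and each terminal piece is J--prime. The key observation is that refining a Type II piece produces only Type II pieces: if $T=X*Y$ then $\aff(X),\aff(Y)\subseteq\aff(T)$, and since $O\notin\aff(T)$ for a Type II set, neither $X$ nor $Y$ can contain $O$ in its affine hull, so neither can be of Type I. Hence the number of Type I pieces is unchanged by this refinement, and the orthogonal subspaces still sum to $d$.

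Finally I would fix the count, which simultaneously settles the case $k=1$ left uncovered by Theorem \ref{th32}. For any decomposition into J--prime pieces in orthogonal subspaces $L_i$ of dimension $d_i$ with $\sum_i d_i=d$, a Type I piece has $|S_i|=d_i+1$ and a Type II piece has $|S_i|=d_i$; summing gives $|S|=\sum_i|S_i|=d+p$, where $p$ is the number of Type I pieces, so comparing with $|S|=d+k$ yields $p=k$. For $k=1$ we have $|S|=d+1$: if $O$ is interior to $\conv(S)$ then $S$ is itself of Type I and J--prime; otherwise $O$ lies on the boundary of (or off the affine hull of) $\conv(S)$, Lemma \ref{prop2} splits off the minimal face whose relative interior contains $O$ as the unique Type I piece, and the same count gives $p=1$. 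I expect the main obstacle to be the J--prime dichotomy in the second step --- verifying that every indecomposable piece is exactly of Type I or Type II and that splitting a Type II set can never manufacture a Type I piece --- after which the identity $|S|=d+p$ makes $p=k$ immediate.
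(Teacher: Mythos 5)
Your route is the same as the paper's: the paper's entire proof of Theorem \ref{KTD} is the remark that Theorem \ref{th32} implies it, and your three steps (invoke Theorem \ref{th32}, refine the Type II pieces until every piece is J--prime while checking that refinement creates no new Type I piece, then fix the count by the cardinality identity) are exactly the details that remark suppresses. However, one step as you wrote it genuinely fails: the opening reduction ``assume $\aff(S\cup\{0\})={\mathbb R}^d$, otherwise pass to the smaller sphere.'' This is not a valid reduction, because the parameter $k$ is not preserved: if $S$ spans only a $d'$--dimensional subspace, then on the smaller sphere $|S|=d'+k'$ with $k'=k+(d-d')>k$, and the theorem applied there produces $k'$ Type I pieces, not $k$. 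In fact, without a spanning hypothesis the statement is false as written: the square $S=\{(\pm1,0,0),(0,\pm1,0)\}\subset{\mathbb S}^2\subset{\mathbb R}^3$ is a two--distance set with minimum distance $\sqrt{2}$ and $|S|=4=d+k$ with $d=3$, $k=1$, yet its unique decomposition into J--prime pieces is the join of the two antipodal pairs, and each pair is of Type I --- two Type I pieces, not one. So spanning must be read as part of the hypothesis rather than obtained for free; the paper leaves this implicit, and it is harmless there because Theorem \ref{KTD} is applied (in Theorem \ref{th52}) to a minimal J--spherical representation, for which spanning is automatic.

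Once spanning is assumed, the rest of your argument is sound, and the ``dichotomy'' you flag as the main obstacle closes by your own counting. Your key observation is the right one: a join factor $X$ of a Type II set satisfies $O\notin\aff(X)$, so refinement never manufactures a Type I piece. Now, since $S$ spans ${\mathbb R}^d$, every piece $X$ spans its orthogonal subspace $L_X$; hence $O\notin\aff(X)$ forces $\rk(X)=\dim L_X-1$ and so $|X|\ge\dim L_X$. Summing over all pieces (the $k$ Type I ones contributing $\dim L_i+1$ each) gives $|S|\ge d+k$, and equality $|S|=d+k$ forces $|X|=\dim L_X$ for every refined piece, i.e.\ each such piece is of Type II --- which is precisely the missing half of the dichotomy, and it simultaneously yields $p=k$. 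Two small repairs: Lemma \ref{prop2} cannot be the tool that splits a non--J--prime Type II piece, since its hypotheses $|S|\ge d+1$ and $O\in\partial\conv(S)$ fail for such sets (no lemma is needed; a non--J--prime set splits by the definition of the join, or via Lemma \ref{lemma44}); and in the $k=1$ case the subcase $O\notin\aff(S)$ is not handled by Lemma \ref{prop2} but is vacuous, since then $S$ would be $(d-1)+2$ points at mutual distance at least $\sqrt{2}$ on a sphere of radius less than $1$ in $\aff(S)$, contradicting Rankin's theorem.
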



\section{Representation numbers of the join of graphs}

Recall that the {\em join} $G=G_1+G_2$ of graphs $G_1$ and $G_2$ with disjoint point sets $V_1$ and $V_2$ and edge sets $E_1$ and $E_2$ is the graph union $G_1\cup G_2$ together with the edges joining each point of $V_1$ to each point of $V_2$.  In this section we apply results of Section 5 for the join of graphs.   

The following theorem is a version of Theorem \ref{KTD}. 

 

 \begin{theorem} \label{th52} Let $G$ be a graph with $n$ vertices. Let\, $\ur(G)=n-k\le n-2$. Then   $G=G_1+\ldots+G_m$, where all  $G_i$ are  indecomposable with respect to the join and   
 $$\beta_*(G)=\beta_*(G_1)=\ldots=\beta_*(G_{k})<\beta_*(G_{k+1})\le\ldots\le\beta_*(G_m).$$ 
  \end{theorem}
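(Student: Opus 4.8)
The plan is to read off Theorem~\ref{KTD} in graph--theoretic terms. I would start from the unique J--spherical representation $S:=W_G$ of $G$ provided by Corollary~\ref{corr41}. By hypothesis it sits in ${\Bbb R}^d$ with $d=\ur(G)=n-k$, so $S$ consists of $|S|=n=d+k$ points on the unit sphere with minimum distance $a=\sqrt2$. The assumption $\ur(G)\le n-2$ forces $k\ge2$, while the lower bound $\ur(G)\ge n/2$ from the corollary following Theorem~\ref{Jrepth} gives $k\le n/2\le d$; hence $2\le k\le d$ and Theorem~\ref{KTD} applies. It yields a join decomposition $S=S_1*\ldots*S_m$ with every $S_i$ J--prime and exactly $k$ of the factors, say $S_1,\ldots,S_k$, of Type~I and the remaining $S_{k+1},\ldots,S_m$ of Type~II.

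Next I would verify that this join of point sets is exactly a join of graphs. Put $G_i:=\Gamma(S_i)$. Distinct factors lie in mutually orthogonal linear subspaces through the centre $O$, so for $p\in S_i$, $q\in S_j$ with $i\ne j$ one computes $\dist(p,q)^2=\|p\|^2+\|q\|^2=2$; thus every cross--distance equals the minimum distance $\sqrt2$ and is therefore an edge. Consequently every vertex of $G_i$ is adjacent to every vertex of $G_j$ for $i\ne j$, which is precisely the defining property of the graph join, so $G=\Gamma(S)=G_1+\ldots+G_m$. Since each $S_i$ is J--prime, the proposition characterising J--prime sets shows $\overline{G_i}$ is connected, i.e.\ each $G_i$ is indecomposable with respect to the join.

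It then remains to order the second distances. Because $S$ is a JSTD set with second distance $b(S)=\beta_*(G)$ and all cross--distances equal $\sqrt2$, every factor carrying a second distance satisfies $b(S_i)=\beta_*(G)$; this is Lemma~\ref{lemma43} applied along the decomposition. For the $k$ factors of Type~I, $S_i$ is itself a full--dimensional J--spherical representation of $G_i$ (circumradius $1$), so Lemma~\ref{lemma45} gives $\beta_*(G_i)=b(S_i)=\beta_*(G)$. For a factor of Type~II the circumradius of $S_i$ inside its affine hull is strictly smaller than $1$, so $S_i$ is not the J--spherical representation of $G_i$, and Lemma~\ref{lemma46} yields the strict inequality $\beta_*(G_i)>b(S_i)=\beta_*(G)$. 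Reindexing the Type~II factors by increasing $\beta_*$ produces the chain $\beta_*(G)=\beta_*(G_1)=\ldots=\beta_*(G_k)<\beta_*(G_{k+1})\le\ldots\le\beta_*(G_m)$, as claimed.

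The decomposition itself and the identification of the point--join with the graph--join are routine once Theorem~\ref{KTD} is available, so the delicate point is the comparison between the ambient second distance $b(S_i)$ that a factor inherits inside $S$ and the intrinsic invariant $\beta_*(G_i)$ of the factor graph. This is exactly where the Type~I/Type~II dichotomy does the work: a Type~I factor already realises circumradius $1$ so the two distances coincide, whereas a Type~II factor must be re--inflated to circumradius $1$, which by the monotonicity of $\Phi_{G_i}$ (Lemma~\ref{lemma32}, repackaged in Lemmas~\ref{lemma45} and~\ref{lemma46}) strictly increases its second distance. I expect this second--distance comparison, rather than the combinatorial decomposition, to be the crux.
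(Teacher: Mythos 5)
Your proof is correct and takes essentially the same route as the paper: the paper's (far terser) proof likewise applies Theorem~\ref{KTD} to the J--spherical representation $W_G$, takes $S_1,\ldots,S_k$ to be the Type~I factors, and simply declares the subgraphs $G_i:=\Gamma(S_i)$ ``as required.'' The details you supply --- that orthogonality forces all cross--distances to equal $\sqrt{2}$ so the point--join is the graph join, and that Lemmas~\ref{lemma45} and~\ref{lemma46} give the equality for Type~I factors and the strict inequality for Type~II factors --- are precisely the steps the paper leaves implicit.
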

  
 \begin{proof} Let $S$ be a J--spherical representation of $G$. Then $S$ satisfies the assumptions of Theorem \ref{KTD}. Therefore $S=S_1*\ldots*S_m$. Let $S_1,\ldots,S_k$ be sets of Type I. Thus subgraphs $G_i:=\Gamma(S_i)$ are as required.  
 \end{proof}

\begin{theorem} \label{thJG} Let  $G_1,\ldots,G_q$ be a finite collection of graphs with $n_1,\ldots,n_q$ vertices, respectively. Let $G:=G_1+\ldots+G_q$ and  $n:=n_1+\ldots+n_q$.  Suppose   
$$\beta_*(G_1)=\ldots=\beta_*(G_{p})<\beta_*(G_{p+1})\le\ldots\le\beta_*(G_q).$$ 
 Then 
$$\ur(G)=\ur(G_1)+\ldots+\ur(G_p)+n_{p+1}+\ldots+n_q, $$
$$\sr(G)=\ur(G), \qquad \rp(G)=\min(\ur(G),n-2).$$
\end{theorem}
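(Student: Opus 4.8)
The plan is to first build the (unique) J--spherical representation $W_G$ by hand as a join, read off $\ur(G)$ and $\beta_*(G)$ from it, and then obtain $\sr(G)$ and $\rp(G)$ from the value of $\mathcal R(G)$ via Theorems~\ref{cor21} and~\ref{Jrepth}. Write $\beta:=\beta_*(G_1)=\dots=\beta_*(G_p)$ for the common minimal value (note $\beta>\sqrt2$ since each $G_i\neq K_{n_i}$). For $i\le p$ take the genuine J--spherical representation $W_{G_i}$, of dimension $\ur(G_i)$, circumradius $1$ and second distance $\beta$; since these share the second distance $\beta$, Lemma~\ref{lemma43} joins them into one JSTD set, concentric at the origin, of dimension $\ur(G_1)+\dots+\ur(G_p)$. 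For $i>p$ we have $\sqrt2<\beta<\beta_*(G_i)$, so Theorem~\ref{Jrep} (whose radius parameter runs over second distances from $\sqrt2$ up to $\beta_*(G_i)$) supplies a representation $T_i$ of $G_i$ with first distance $\sqrt2$, second distance $\beta$, lying on a sphere of radius $R_i<1$; because $\beta^2/2<\tau_1(G_i)$ the determinant $C_{G_i}(\beta^2/2)$ is nonzero, so $T_i$ is a full simplex of rank $n_i-1$ and, the origin lying off $\aff(T_i)$, the linear span $\aff(T_i\cup 0)$ has dimension $n_i$. Placing all blocks in mutually orthogonal subspaces through the origin, every point has norm $1$, so each cross--distance is $\sqrt2$; by Lemma~\ref{lemma44} and Theorem~\ref{thJS2} the union is a JSTD set whose graph is $G$, with second distance $\beta$. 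By the uniqueness in Corollary~\ref{corr41} it equals $W_G$, and taking (affine) dimensions---legitimate since $p\ge1$ puts the origin in $\aff(W_G)$---gives $\beta_*(G)=\beta_*(G_1)$ together with
$$\ur(G)=\ur(G_1)+\dots+\ur(G_p)+n_{p+1}+\dots+n_q.$$

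For $\sr$ and $\rp$ I would first note that, since every $G_i\neq K_{n_i}$, the join $G$ is connected and has a non--edge, so it is not a disjoint union of cliques and the Einhorn--Schoenberg criterion gives $\rp(G)\le n-2$. If $\ur(G)\le n-2$, then the remark preceding Theorem~\ref{Jrepth} forces $\mathcal R(G)=1/\sqrt2$; hence Theorem~\ref{Jrepth} yields $\rp(G)=\ur(G)=\min(\ur(G),n-2)$, and since $\mathcal R(G)<\infty$ Theorem~\ref{cor21} yields $\sr(G)=\rp(G)=\ur(G)$. This disposes of both identities whenever $\ur(G)\le n-2$.

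The remaining, critical case is $\ur(G)=n-1$, which by the construction above forces $p=1$ and $\ur(G_1)=n_1-1$; here I must show $\mathcal R(G)=\infty$ (so that $\sr(G)=n-1=\ur(G)$ by Theorem~\ref{cor21}) and $\mu(G)=1$ (so that $\rp(G)=n-2$ by Theorem~\ref{thER}). The minimal Euclidean representation lives at $b=\sqrt{\tau_1(G)}$, and since all join--edges have length $a$, Lemma~\ref{lemma44} exhibits it as a metric join of blocks representing the $G_i$ whose circumcenters $\{c_i\}$ form an auxiliary $(q-1)$--simplex with $|c_i-c_j|^2=a^2-r_i^2-r_j^2$, the $r_i$ being the block circumradii. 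As $b$ grows from its J--spherical value the radii $r_i$ increase (Lemma~\ref{lemma32}) and this auxiliary simplex shrinks; the representation first degenerates exactly when its squared volume reaches zero, i.e.\ when one circumcenter drops into the affine hull of the rest. Because the edges $|c_i-c_j|$ decrease monotonically, this is a simple (transversal) zero of a single polynomial in $t=b^2$---beyond it the auxiliary distances violate the simplex inequalities---so the affine rank falls by exactly one: $\mu(G)=1$ and $\rp(G)=(q-2)+\sum_i(n_i-1)=n-2$. At this flattened configuration the blocks cannot all share one circumradius (otherwise the auxiliary simplex would be regular, hence non--degenerate), so by the equidistance relation $\rho^2=|O-c_i|^2+r_i^2$ a common center $O$ is over--determined and the set lies on no sphere of its own dimension; thus $\mathcal R(G)=\infty$.

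The main obstacle is exactly this last case: verifying that the \emph{first} breakdown of the metric join is the simple, codimension--one collapse of the circumcenter simplex and that it occurs strictly before any single block can collapse (equivalently, that $\tau_1(G)$ is a simple root of $C_G$ lying below every $\tau_1(G_i)$), and that the flattened join is genuinely non--spherical. This rests on a volume/Cayley--Menger factorization for orthogonal metric joins together with the radius monotonicity of Lemma~\ref{lemma32}; by contrast, the first two parts are formal consequences of the results of Sections~3--5.
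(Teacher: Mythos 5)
Your first two steps are sound and essentially reproduce the paper's own argument: building $W_G$ as a join of the Type I blocks $W_{G_i}$ ($i\le p$) with Type II blocks of second distance $\beta$ ($i>p$), gluing via Lemmas \ref{lemma43}, \ref{lemma44} and Theorem \ref{thJS2}, and invoking uniqueness (Corollary \ref{corr41}) to read off $\ur(G)$; and when $\ur(G)\le n-2$ the identities for $\sr(G)$ and $\rp(G)$ do follow formally from Theorems \ref{cor21} and \ref{Jrepth} exactly as you say.

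The genuine gap is the case $\ur(G)=n-1$, which you yourself flag as the ``main obstacle.'' There you need precisely $\mathcal R(G)=\infty$ (to get $\sr(G)=n-1$) and $\mu(G)=1$ (to get $\rp(G)=n-2$), and your route to these is not carried out: monotone shrinking of the circumcenter distances $|c_i-c_j|$ does not by itself make the zero of the auxiliary volume polynomial simple or transversal, and in any case $\mu(G)$ is the multiplicity of $\tau_1$ in the full $n$--point determinant $C_G(t)$, which you never relate to the $q$--point circumcenter configuration; the ``volume/Cayley--Menger factorization for orthogonal metric joins'' that would make this link is nowhere established (not in the paper either). Likewise your non-sphericity claim (``a common center is over-determined'') is asserted, not proved. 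The paper closes this case by a different and much shorter argument that avoids root multiplicities entirely: since $G$ is not a disjoint union of cliques, $\rp(G)\le n-2$ by Einhorn--Schoenberg, so the minimal Euclidean representation $f$ in ${\mathbb R}^{n-2}$ exists and is unique; writing $G=F_1+H$ with $F_1$ the unique J--prime block of minimal $\beta_*$, Lemma \ref{lemma44} makes $X_1:=f(F_1)$ and $X_2:=f(H)$ orthogonal spherical sets, Lemma \ref{lemma42} forces $R_1^2+R_2^2=a^2$, and $R_1=R_2$ is impossible, since rescaling would then turn $X_1$ and $X_2$ into JSTD sets whose join is a J--spherical representation of $G$ of dimension at most $n-2$, contradicting $\ur(G)=n-1$. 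Hence the unique minimal representation is not spherical, so $\sr(G)=n-1$; and the rank count $\rk(X_1\cup X_2)=(v_1-1)+(v_H-1)=n-2$, using $b(X_2)<\beta_*(H)$ to see that $X_2$ has full rank, gives $\rp(G)=n-2$ directly. You should replace your degeneration argument by this uniqueness-plus-orthogonal-decomposition argument, which uses only lemmas already at hand.
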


\begin{proof} By Theorem \ref{th52} there are graphs  $F_1,\ldots,F_m$  indecomposable with respect to the join and such that $G:=F_1+\ldots+F_m$, $k:=k_1+\ldots+k_p$, where $k_i:=n_i-\ur(G_i)$,  and    
$$\beta_*(F_1)=\ldots=\beta_*(F_{k})<\beta_*(F_{k+1})\le\ldots\le\beta_*(F_m).$$

 Let $S_i:=W_{F_i}$, $i=1,\ldots k$. For $i>k$, denote by $S_i$ a sets of Type II with $\Gamma(S_i)=F_i$ and $b(S_i)=\beta_*(F_1)$. Then let $S=S_1*\ldots*S_m$ be a J--spherical representation of $G$. It is clear that  $\rk(S)=n-k$.

If $k\ge2$, then  $\ur(G)\le \rk(S)\le n-2$.  In this case Lemma \ref{lemma42}, Theorem \ref{cor21} and Theorem \ref{Jrepth} yield 
$$
\rp(G)=\sr(G)=\ur(G)=n-k=\ur(G_1)+\ldots+\ur(G_p)+n_{p+1}+\ldots+n_q.
$$ 

Now consider the case  $\ur(G)=n-1$ or, equivalently, $k=1$. Let $H:=F_2+\ldots+F_m$. Note that $\beta_*(F_1)<\beta_*(H)=\beta_*(F_2)$. 

Since $G$ is not a disjoint union of cliques, $\rp(G)\le n-2$. Therefore, a Euclidean representation $f:G=F_1+H \to {\mathbb R}^{n-2}$  is unique. Let $X_1:=f(F_1)$ and $X_2:=f(H)$.   
From Lemma \ref{lemma44} it follows that $X_1$ and $X_2$ are spherical orthogonal sets. Moreover, by Lemma \ref{lemma42} we have $R^2_1+R^2_2=a^2$, where $R_i$ denotes the circumradius of $X_i$. 

First note that $R_1\ne R_2$, otherwise $X$ and $Y$ would be JSTD sets with $\rp(G)=\ur(G)=n-1$.   Hence $f$ would not a spherical representation and $\sr(G)=n-1$. 

Note that $R_1>R_2$. Indeed, it follows from the fact that  $b(X_1)=b(X_2)$, but $\beta_*(F_1)<\beta_*(H)$. Since  $b(X_2)<\beta_*(H)$, we have $\rk(X_2)=v_H-1$, where $v_H$ denotes the number of vertices of $H$. Thus  $\rp(G)=\rk(X_1\cup X_2)=v_1-1+v_H-1=n-2$. 
  \end{proof}

\begin{corr} \label{corJ} Let $G$ be the complete multipartite graph $K_{n_1\ldots n_m}$ and $n:=n_1+\ldots+n_m.$ 
Suppose 
$$n_1=\ldots=n_{k} > n_{k+1}\ge\ldots\ge n_m.$$ 
 Then
$$
\sr(G)=\ur(G)=n-k, \qquad \rp(G)=\min(n-k,n-2)
$$
\end{corr}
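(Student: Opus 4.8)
The plan is to realize $G=K_{n_1\ldots n_m}$ as a join of empty graphs and then invoke Theorem~\ref{thJG} directly. Writing $\overline{K}_{s}$ for the edgeless graph on $s$ vertices (the complement of $K_s$), the complete multipartite graph is precisely the join
$$
G = \overline{K}_{n_1} + \overline{K}_{n_2} + \ldots + \overline{K}_{n_m},
$$
since joining the $m$ independent sets inserts exactly the edges between distinct parts and none inside a part. So I would apply Theorem~\ref{thJG} with $G_i:=\overline{K}_{n_i}$ and $q:=m$; the whole argument then reduces to computing $\ur(\overline{K}_s)$ and $\beta_*(\overline{K}_s)$ for a single empty graph and to matching the two orderings.

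For the empty graph the required data is read off from the proof of Theorem~\ref{Jrep}: its J--spherical representation (the case $R=1$, where $\beta_*$ is attained) is the vertex set of a regular $(s-1)$--simplex of side length $b_*=\sqrt{2s/(s-1)}$ inscribed in the unit sphere. Hence
$$
\ur(\overline{K}_s)=s-1, \qquad \beta_*(\overline{K}_s)^2=\frac{2s}{s-1}=2+\frac{2}{s-1}.
$$
The crucial observation is that $\beta_*(\overline{K}_s)$ is \emph{strictly decreasing} in $s$ (and equals $\infty$ for the degenerate singleton $s=1$). Consequently the parts of maximal size, namely $\overline{K}_{n_1},\ldots,\overline{K}_{n_k}$ with $n_1=\ldots=n_k$, are exactly the parts carrying the strictly smallest value of $\beta_*$, and these values coincide. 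In the notation of Theorem~\ref{thJG} this says precisely that the minimal--$\beta_*$ block has size $p=k$, i.e.
$$
\beta_*(G_1)=\ldots=\beta_*(G_k) < \beta_*(G_{k+1}) \le \ldots \le \beta_*(G_m).
$$

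Plugging this into Theorem~\ref{thJG} then finishes the argument:
$$
\ur(G)=\sum_{i=1}^{k}\ur(\overline{K}_{n_i})+\sum_{i=k+1}^{m}n_i=\sum_{i=1}^{k}(n_i-1)+\sum_{i=k+1}^{m}n_i=n-k,
$$
after which $\sr(G)=\ur(G)=n-k$ and $\rp(G)=\min(\ur(G),n-2)=\min(n-k,n-2)$ are immediate from the same theorem. I expect the only point genuinely requiring care to be this monotonicity step: one must both confirm the explicit value $\beta_*(\overline{K}_s)^2=2s/(s-1)$ and check that $s\mapsto\beta_*(\overline{K}_s)$ strictly decreases, so that the $k$ largest parts are matched unambiguously with the block $p=k$. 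This viewpoint also transparently absorbs the boundary cases: singleton parts have $\beta_*=\infty$ and therefore sit in the $i>k$ block (contributing their full $n_i=1$ rather than $n_i-1$), while the excluded graph $G=K_n$ is exactly the case in which all $n_i=1$, for which $\ur$ is undefined. The subcase $k=1$ needs no separate treatment, as the formula $\rp(G)=\min(n-1,n-2)=n-2$ is already delivered by Theorem~\ref{thJG}.
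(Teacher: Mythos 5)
Your proof is correct and takes essentially the same route as the paper: both decompose $K_{n_1\ldots n_m}$ as the join $\bar K_{n_1}+\ldots+\bar K_{n_m}$, use the explicit value $\beta_*(\bar K_s)=\sqrt{2s/(s-1)}$ (strictly decreasing in $s$) to translate the size ordering into the $\beta_*$ ordering required by Theorem~\ref{thJG}, and conclude with $\ur(\bar K_s)=s-1$. Your additional checks (strict monotonicity, singleton parts, the excluded case $G=K_n$) merely make explicit what the paper's terser proof leaves implicit.
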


\begin{proof} Note that $$K_{n_1\ldots n_m}=\bar K_{n_1}+\ldots+\bar K_{n_m}.$$ Since 
$$
\beta_*(\bar K_n)=\sqrt{\frac{2n}{n-1}},
$$
our assumption is equivalent to 
$$\beta_*(\bar K_{n_1})=\ldots=\beta_*(\bar K_{n_{k}}) < \beta_*(\bar K_{n_{k+1}})\le\ldots\le \beta_*(\bar K_{n_m}).$$ 
Thus, this corollary follows from Theorem \ref{thJG} and the obvious fact that the empty graph  $\bar K_{\ell}$ is indecomposable with respect to the join, i.e. $\ur(\bar K_{\ell})=\ell-1$. 
\end{proof}


\section{Concluding remarks and open problems} 

First we consider open problems that are directly related to this paper. 

\subsection{Range of the circumradius ${\mathcal R}(G)$}
Let ${\mathcal R}(G)<\infty$. {\em What is the range of  ${\mathcal R}(G)$?}   Since for a fixed $n$ there are finitely many graphs $G$ this range is a countable subset of the interval $[1/\sqrt{2},\infty)$. 

 {\em What is the maximum value of  ${\mathcal R}(G)$? Can ${\mathcal R}(G)$ be greater than 1?}

\subsection{Monotonicity and convexity of the function $F_G(t)$}
Lemma \ref{lemma32} states that the function $\Phi_G(x)$ is increasing on $I_G$.  If the circumcenter of a simplex  $\Delta_G(x)$ lies in this simplex, then its circumradius and the radius of the minimum enclosing sphere are the same, i.e.    $F_G(t)=\Phi^2_G(x)$,  $x=\sqrt{2t}$. Therefore, under this constraint $F_G(t)$ is monotonic. Our conjecture is:

\medskip 

{\em  $F_G(t)$ is a monotonic increasing function for all $t\in(1,\tau_1(G))$.}

\medskip 

\noindent Moreover, we think that 

\medskip 

{\em $F_G(t)$  is convex on the interval $(1,\tau_1(G))$.} 


\subsection{The second distance $\beta_*(G)$}
There are two interesting questions about $\beta_*(G)$:\\ {\em (1) What is the range of $\beta_*(G)$? \\ (2) Can $\beta_*(G_1)=\beta_*(G_2)$ for distinct $G_1$ and $G_2$? }

For the second question the answer is positive.  Let $\sigma$ be a collection of positive integers $n_{1},\ldots, n_{m}$ with $m>1$. We denote 
 $$
 |\sigma|:=n_{1}+\ldots + n_{m}. 
 $$
Let  $\bar K_\sigma:=\bar K_{n_1,\ldots,n_m}$, where $\bar K_{n_1,\ldots,n_m}$ is  the graph complement of the complete $m$--partite graph $K_{n_1,\ldots,n_m}$. In other words, $\bar K_\sigma$  is the disjoint union of cliques of sizes $n_1,\ldots, n_m$. 

Einhorn and Schoenberg \cite{ES} proved that 
$$
\rp(\bar K_\sigma)=|\sigma|-1. 
$$
Moreover, the converse statement is also true.  If for a graph $G$ on $n$ vertices we have $\rp(G)=n-1$, then $G$ is $\bar K_\sigma$ for some $\sigma$ with $|\sigma|=n$. 

 
  Let $\sigma_1=(1,1,1), \,  \sigma_2=(2,2)$ and $\sigma_3=(1,4)$. Then   $\beta_*(\sigma_i)=\sqrt{3}$ for $i=1,2,3$.  
  
  Another example, 
  $$
  \sigma=(1,1,1,1,1), \, (2,2,2), \, (4,4), \, (2,8), \, (1,16). 
  $$
For all these collections   $\beta_*(\sigma)=\sqrt{5/2}$. 

It is an interesting problem {\em to describe sets of collections $\sigma$ with the same $\beta_*(\sigma)$.}

\subsection{Sets of Type II}
 In Section 4 we consider join--indecomposable spherical sets of Type I and II.  Note that if we remove a point from a J--prime set of Type I, then we obtain a set of Type II. It is not clear can we use this method to obtain all sets of Type II?  In other words,
 
 \medskip
 
 {\em Is it true that any J--prime set of Type II is a subset of a set of Type I?} 
 
 \medskip
 
 \medskip
 
 Now we consider generalizations of graph representations. 

\subsection{Spherical representations with ${\mathcal R}(G)\le R_0$} 
Let $f$ be a  spherical representation of a graph $G$ on $n$ vertices  in ${\Bbb R}^d$  as a two--distance set with $a=1$ and $b>a$.  Let $R_0$ be a positive real number. 
We say that $f$ is a {\em minimal spherical representations with ${\mathcal R}(G)\le R_0$} if the image $f(G)$ lies in a sphere of radius $R\le R_0$ with the smallest $d$.  If $G\ne K_n$, then Theorem \ref{Jrep} yields the existence of such representations with $d\le n-1$. We denote the minimum dimension $d$ by $\sr(G,R_0)$. 

Note that $\sr(G,1/\sqrt{2})=\ur(G)$.  It is easy to see that for $R_0\ge 1/\sqrt{2}$ we have 
$$
\ur(G)\ge \sr(G,R_0) \ge \sr(G). 
$$

The following theorem can be proved by the same arguments as in the proof of Theorem \ref{Jrepth}. 
\begin{theorem} \label{thR0} Let $G\ne K_n$ be a graph on $n$ vertices. Let $R_0\ge 1/\sqrt{2}$. If\, ${\mathcal R}(G)\le R_0$, then 
$$
\sr(G,R_0)=n-\mu(G)-1, \, \mbox{ otherwise } \, \sr(G,R_0)=n-1.
$$ 
\end{theorem}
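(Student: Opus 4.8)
The plan is to follow the proof of Theorem~\ref{Jrepth} almost verbatim, replacing the exact circumradius constraint ``${\mathcal R}(G)=1/\sqrt2$'' by the inequality constraint ``$R\le R_0$''. The entire argument rests on one structural fact: when $\rp(G)\le n-2$, the Einhorn--Schoenberg uniqueness theorem guarantees that a spherical representation of $G$ of dimension strictly below $n-1$ is unique up to isometry, is obtained by taking the second distance $b=\sqrt{\tau_1(G)}$, has dimension exactly $\rp(G)=n-\mu(G)-1$, and has circumradius precisely ${\mathcal R}(G)=\sqrt{F_G(\tau_1)}$. I would split into the two cases ${\mathcal R}(G)\le R_0$ and ${\mathcal R}(G)>R_0$.

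First I would treat ${\mathcal R}(G)\le R_0$ (in particular ${\mathcal R}(G)<\infty$). By Theorem~\ref{cor21} this forces $\sr(G)=\rp(G)$, and since every representation counted by $\sr(G,R_0)$ is in particular a spherical representation, $\sr(G,R_0)\ge\sr(G)=\rp(G)$. For the reverse inequality, the unique minimal spherical representation described above lies on a sphere of radius ${\mathcal R}(G)\le R_0$ and has dimension $\rp(G)$; hence it is admissible for $\sr(G,R_0)$, giving $\sr(G,R_0)\le\rp(G)$. Combining the two bounds yields $\sr(G,R_0)=\rp(G)=n-\mu(G)-1$.

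Next I would treat ${\mathcal R}(G)>R_0\,(\ge1/\sqrt2)$ and show $\sr(G,R_0)=n-1$. For the upper bound I would exhibit a concrete admissible representation of dimension $n-1$: the $J$--spherical representation $W_G$ of Corollary~\ref{corr41}, rescaled so that $a=1$, lies on a sphere of radius $1/\sqrt2\le R_0$, and since ${\mathcal R}(G)>1/\sqrt2$ Theorem~\ref{Jrepth} gives $\dim W_G=\ur(G)=n-1$; thus $\sr(G,R_0)\le n-1$. For the matching lower bound I must rule out an admissible representation of dimension $\le n-2$. Any such representation is the unique minimal one, so its circumradius is forced to be exactly ${\mathcal R}(G)>R_0$, which violates the constraint; therefore no admissible representation exists below dimension $n-1$, and $\sr(G,R_0)=n-1$.

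The step that needs care, in both cases, is the claim that the only spherical representation of dimension $<n-1$ is the one coming from $t=\tau_1$. Here I would invoke the root structure of $C_G$: since $\tau_0(G)=1/\tau_1(\bar G)<1$, the only root of $C_G$ exceeding $1$ in the admissible interval is $\tau_1(G)$ itself, so every admissible second distance $b=\sqrt{t}$ with $1<t<\tau_1$ yields a full--dimensional $(n-1)$--simplex (by Lemma~\ref{lemma31} together with the circumradius formula~(3.4)), and only $t=\tau_1$ lowers the dimension, to $\rp(G)$. Consequently there is no representation of any intermediate dimension between $\rp(G)$ and $n-1$, the minimal representation is genuinely isolated and unique, its circumradius is forced to equal ${\mathcal R}(G)$, and the dichotomy ${\mathcal R}(G)\le R_0$ versus ${\mathcal R}(G)>R_0$ decides admissibility in exactly the way the dichotomy ${\mathcal R}(G)=1/\sqrt2$ versus ${\mathcal R}(G)>1/\sqrt2$ decides it in Theorem~\ref{Jrepth}. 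This uniqueness of the low--dimensional representation is the main obstacle; everything else is a direct transcription of the earlier proof.
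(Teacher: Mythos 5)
Your proof is correct and takes essentially the same approach as the paper: the paper proves Theorem~\ref{thR0} only by remarking that it follows ``by the same arguments as in the proof of Theorem~\ref{Jrepth}'', and your two--case argument (any spherical representation of dimension at most $n-2$ must come from $t=\tau_1(G)$ and hence has circumradius exactly ${\mathcal R}(G)$, while the rescaled J--spherical representation of Corollary~\ref{corr41} always supplies an admissible representation in dimension $n-1$) is precisely that adaptation with the details filled in.
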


Since in Theorem \ref{Jrepth} we have $\ur(G)=\sr(G)$ this theorem also holds for $\sr(G,R_0)$.  Consider interesting problem: \\ {\em Find families  of graphs $G$ with $\sr(G,R_0)=\sr(G)$.}  

Another interesting question is {\em to find the minimum $R_0$ such that  $\sr(G,R_0)=\sr(G)$ for all $G$.}  In particular, {\em is it true that this equality holds for $R_0=1$?}  (See Subsection 7.1.)


\subsection{Representations of colored $E(K_n)$ as  $s$--distance sets}
First consider an equivalent definition of graph representations. Let $G=(V(G),E(G))$ be a graph on $n$ vertices. We have $E(K_n)=E(G)\cup E(\bar G)$. Then it is can be considered as a coloring of $E(K_n)$ in two colors. Hence 
$$E(K_n)=E_1\cup E_2, \; \mbox{ where } \; E_1\cap E_2=\emptyset.$$ 
Clearly, $G$ is uniquely defined by the equation $E(G)=E_1$. 

Let $L(e):=i$ if $e\in E_i$. Then  $L:E(K_n)\to \{1, 2\}$ is a coloring of $E(K_n)$.  A representation $L$  as a two--distance set is  an embedding $f$ of $V(K_n)$ into  ${\Bbb R}^d$ such that $\dist(f(u),f(v)))=a_i$ for $[uv]\in E_i$. Here $a_2\ge a_1>0$.

\medskip

This definition can be extended to any number of colors. Let $L:E(K_n)\to \{1,\ldots,s\}$ be a coloring of the set of edges of a complete graph $K_n$. Then 
$$
E(K_n)=E_1\cup\ldots\cup E_s, \; E_i:=\{e\in E(K_n): L(e)=i\}. 
$$
We say that  an embedding $f$ of the vertex set of $K_n$ into  ${\Bbb R}^d$  is a {\em Euclidean representation of a coloring $L$  in ${\Bbb R}^d$ as an $s$--distance set} if there are $s$ positive real numbers $a_1\le \ldots\le a_s$ such that $\dist(f(u),f(v)))=a_i$ if and only if $[uv]\in E_i$.

It is easy to extend the definitions of polynomials $C_G(t)$ and $M_G(t)$ for $s$--distance sets. In this case we have multivariate polynomials $C_L(t_2,\ldots,t_{s})$ and $M_L(t_2,\ldots,t_{s})$, where $a_1=1$ and  $t_i=a_i^2$\, for $i=2,\ldots,s$.  It is clear that a Euclidean representation of $L$ is spherical only if $F_L(t_2,\ldots,t_{s})$ is well defined, where 
$$
F_L(t_2,\ldots,t_{s}):=-\frac{1}{2}\frac{M_L(t_2,\ldots,t_{s})}{C_L(t_2,\ldots,t_{s})}. 
$$

We think that the Einhorn--Schoenberg theorem and several results from this paper can be generalized for representations of colorings $L$ as $s$--distance sets. 


\subsection{Contact graph representations of $G$}  
The famous circle packing theorem (also known as the Koebe--Andreev--Thurston theorem) states that for every connected simple planar graph $G$ there is a circle packing in the plane whose contact graph is isomorphic to $G$. 
Now consider representations of a graph $G$ as the contact graph of a packing of congruent spheres in ${\mathbb R}^d$. Equivalently, the contact graph can be defined in the following way. 

Let $X$ be a finite subset of ${\Bbb R}^{d}$. Denote
$$\psi(X):=\min\limits_{x,y\in X}{\{\dist(x,y)\}}, \mbox{ where } x\ne y.$$
The {\it contact graph} $\cg(X)$  is a graph with vertices in $X$ and edges $(x,y), \, x,y\in X$, such that $\dist(x,y)=\psi(X)$. In other words, $\cg(X)$ is the contact graph of a s packing of spheres of diameter $\psi(X)$ with centers in $X$. 

Let a graph $G=(V,E)$ on $n$ vertices have at least one edge. Let $f$ be a Euclidean representation of  vertices of $G$  in ${\Bbb R}^d$. We say that $f$ with minimum $d$ is a {\em minimal Euclidean contact graph representation} if $G$ is isomorphic to $\cg(X)$, where $X=f(V)$.  If $X$ lies on a sphere then we call $f$ a  {\em minimal spherical contact graph representation}. 

There are several combinatorial properties of contact graphs, see the survey paper \cite{bezdek2013}. For instance, the degree of any vertex of $\cg(X)$, $X \subset {\Bbb R}^d$, is not exceed the kissing number $k_d$. For spherical contact graph representations in ${\mathbb S}^2$ this degree is not greater than five. Using this and other properties of $\cg(X)$ we enumerated spherical irreducible contact graphs for $n\le 11$ \cite{mus_tar14, mus_tar15}. 

\medskip

It is an interesting problem to {\em find minimal dimensions  of Euclidean and spherical contact graph representations of graphs $G$.}
 
 \medskip

\medskip

\noindent{\bf Acknowledgment.}  I am very grateful to the reviewers of this paper for their great help in improving the text and helpful comments.

\medskip

\medskip

\medskip

\medskip

\medskip

\medskip

 O. R. Musin, School of Mathematical and Statistical Sciences, University of Texas Rio Grande Valley,  One West University Boulevard, Brownsville, TX, 78520.

 {\it E-mail address:} oleg.musin@utrgv.edu

\end{document}